\newcommand{\mf}{\mathfrak}
\newcommand{\mb}{\mathbf}
\newcommand{\R}{\mathbf R}
\newcommand{\C}{\mathbf C}
\newcommand{\Z}{\mathbf Z}
\newcommand{\F}{\mathbf F}
\newcommand{\N}{\mathbf N}
\newcommand{\Adj}{\textnormal{Adj}}
\newcommand{\Jac}{\textnormal{Jac}}
\newcommand{\Stab}{\textnormal{Stab}}
\numberwithin{equation}{section}
\theoremstyle{plain}
\newtheorem{theorem}{Theorem}
\newtheorem{lemma}[theorem]{Lemma}
\theoremstyle{definition}
\newtheorem{definition}[theorem]{Definition}
\newtheorem{example}[theorem]{Example}
\begin{document}

\title[Jacobian Matrices]{On the Jacobian Matrices of Generalized Chebyshev Polynomials}

\author[\.{I}LER\.{I}]{AHMET \.{I}LER\.{I}}
\address{Middle East Technical University, Mathematics Department, 06800 Ankara,
	Turkey.}
\curraddr{}
\email{komer@metu.edu.tr}

\author[K\"{U}\c{C}\"{U}KSAKALLI]{\"{O}MER K\"{U}\c{C}\"{U}KSAKALLI}
\address{Middle East Technical University, Mathematics Department, 06800 Ankara,
	Turkey.}
\curraddr{}
\email{ahmet.ileri@metu.edu.tr}

\date{\today}

\begin{abstract}
In this paper, we give a practical method to compute the Jacobian matrices of generalized Chebyshev polynomials associated to arbitrary semisimple Lie algebras. The entries of each Jacobian matrix can be expressed as a linear combination of characters of irreducible representations of the underlying Lie algebra with integer coefficients. These integer coefficients can be obtained by basic computations in the fundamental Weyl chamber.
\end{abstract}

\dedicatory{In memory of James E. Humphreys. (1939-2020)}

\subjclass[2010]{17B20,13A50}

\keywords{exponential invariants, character formula}

\maketitle

\section{Introduction}
The generalized Chebyshev polynomials are polynomial mappings $P_{\mf{g}}^k: \C^n \rightarrow \C^n$ with integer coefficients obtained from the exponential invariants of arbitrary semisimple Lie algebras $\mf{g}$ of rank $n$. From their definition, they naturally commute
\[P_{\mf{g}}^k \circ P_{\mf{g}}^l = P_{\mf{g}}^l \circ P_{\mf{g}}^k\]
and it is believed that they exhaust all commuting polynomials under certain additional assumptions \cite{veselov-survey}. They are orthogonal with respect to a certain measure and can be extended to a complete set of orthogonal polynomials \cite{hoffwith}.

The present manuscript has its origin in an attempt to classify (arithmetically) exceptional polynomial mappings with two or more variables. We recall that a polynomial mapping $P \in {\Z}[\mb{x}]$ in $n$ variables is said to be (arithmetically) exceptional if the reduced map $\bar{P} : {\F}_p^n \to {\F}_p^n$ is a permutation for infinitely many primes $p$. The classification of exceptional polynomials with one variable is finished \cite{fried}. They are the compositions of linear polynomials, power maps and Chebyshev polynomials. The ideas of Fried can be extended to the projective setting by translating exceptionality to a property of permutation groups \cite{guralnick}.  

The elementary symmetric polynomials and the power-sum symmetric polynomials both  generate the algebra of symmetric polynomials. Using this basic idea, Lidl and Wells proved the existence of polynomial mappings of arbitrary rank which are exceptional \cite{lidlwells}. This basic construction of Lidl and Wells can be related to the simple complex Lie algebras $A_n$ \cite{hoffwith}. In a previous work of the second author, it is proved that the generalized Chebyshev polynomials $P_{\mf{g}}^k$ are exceptional for any prime $k>e+1$ where $e$ is the exponent of the Weyl group \cite{kucuk-bulletin}. 

We hope to use the theory of Lie algebras to understand the classification problem of exceptional polynomials by possibly eliminating the need for group theoretical techniques not available for higher ranks. In that case, we believe that the Jacobian matrices of generalized Chebyshev polynomials would be a key tool since they determine the ramification locus.

The organization of the paper is as follows. In the second section, we give some basic notation and terminology about the root systems together with some basic results that will be used in further sections. In the third section, we review the theory of exponential invariants and provide a proof of a theorem of R.~Steinberg that we believe to have remained unpublished.  In the fourth section, we give the definition of generalized Chebyshev polynomials. In the fifth section, we state and prove our main result, and provide some examples of low rank. 

\section{Notation and Terminology}
In this section, we give some basic notation and terminology. We will also state some results that are essential in the rest of the manuscript. The main references are \cite{hump-lie} and \cite{hump-ref}. Let $E$ be an $n$-dimensional (real) Euclidean vector space endowed with a positive definite symmetric bilinear form. For any nonzero vector $\alpha\in E$, let $H_\alpha$ be the hyperplane through the origin orthogonal to the line $L_\alpha=\R\alpha$. The reflection in the hyperplane $H_\alpha$ is given by
\[\sigma_\alpha (\beta) = \beta - \frac{2(\beta,\alpha)}{(\alpha,\alpha)}\alpha\]  
The number $2(\beta,\alpha)/(\alpha,\alpha)$ appears frequently and it is abbreviated by $\langle \beta, \alpha \rangle$. A subset $\Phi$ of $E$ is called a root system in $E$ if the following axioms are satisfied:
\begin{enumerate}[label=(R\arabic*)]
	\item $\Phi$ is finite, spans $E$, and does not contain $0$.
	\item If $\alpha \in \Phi$, the only multiples of $\alpha$ in $\Phi$ are $\pm \alpha$.	
	\item If $\alpha\in \Phi$, then the reflection $\sigma_\alpha$ leaves $\Phi$ invariant.
	\item If $\alpha,\beta \in \Phi$, then $\langle \beta, \alpha \rangle \in \Z$.
\end{enumerate}

The elements of $\Phi$ are called roots because of their historical connection to the semisimple Lie algebras. Let $W$ be the subgroup of GL$(E)$ generated by the reflections $\sigma_\alpha, \alpha\in\Phi$. This subgroup $W$ is called the Weyl group of the root system $\Phi$ and it is an example of a finite reflection group. 

There are other examples of finite reflection groups which do not occur as Weyl groups. The remaining cases become available by removing the axiom (R4) and allowing non-crystallographic reflection groups. It is not essential to distinguish roots as longer or shorter to define a finite reflection group. On the other hand, the Weyl group of different semisimple Lie algebras $B_n$ and $C_n$ turn out to be isomorphic. In this manuscript, we will be focusing on the following finite reflection groups: $A_n, B_n / C_n, D_n, G_2, F_4, E_6, E_7, E_8$. 

The algebra of polynomial functions on $E$ is the symmetric algebra $S(E^*)$ of the dual space $E^*$. The symmetric algebra $S(E^*)$ may be identified with the polynomial ring $\R[\mb{x}]=\R[x_1,\ldots,x_n]$ where the $x_i$ are the coordinate functions. A finite reflection group $W \subset \textnormal{GL}(E)$ acts naturally on $\R[\mb{x}]$ by the rule 
$$(wf)(\gamma)=f(w^{-1}\gamma)$$ 
where $w\in W, \gamma\in E$. We say that a polynomial $f\in \R[\mb{x}]$ is $W$-invariant if $wf=f$ for all $w\in W$. 

\begin{theorem}[Chevalley's Theorem \cite{chevalley}]\label{chevalley} The subalgebra $\R[\mb{x}]^W$ of $W$-invariants is generated as an $\R$-algebra by $n$ homogeneous, algebraically independent elements of positive degree (together with $1$). 
\end{theorem}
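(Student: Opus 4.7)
The plan is to prove this classical result in two stages: first produce a finite set of homogeneous invariants that generate $\R[\mb{x}]^W$ as an $\R$-algebra, and then show that this set has exactly $n$ elements which are algebraically independent. Both stages rely on the Reynolds operator $R(f) = |W|^{-1}\sum_{w\in W} wf$, an $\R[\mb{x}]^W$-linear projection $\R[\mb{x}] \to \R[\mb{x}]^W$, while the second stage is where the reflection hypothesis on $W$ becomes indispensable.

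For finite generation, let $I \subset \R[\mb{x}]$ denote the ideal generated by all homogeneous $W$-invariants of positive degree. By the Hilbert basis theorem $I$ is finitely generated, and we may choose a minimal homogeneous generating set $f_1,\ldots,f_r$ consisting of $W$-invariants of positive degree. The claim is that $\R[\mb{x}]^W = \R[f_1,\ldots,f_r]$. Given a homogeneous $f \in \R[\mb{x}]^W$ of positive degree $d$, the inclusion $f \in I$ gives an expression $f = \sum g_i f_i$ with homogeneous $g_i \in \R[\mb{x}]$ of degree $d - \deg f_i < d$. Applying $R$ to both sides and using that $R$ fixes $f$ and each $f_i$, one obtains $f = \sum R(g_i)\,f_i$, where each $R(g_i)$ is a homogeneous $W$-invariant of smaller degree. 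Induction on $d$ closes the argument.

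The deeper stage is to show that $r = n$ and that $f_1,\ldots,f_r$ are algebraically independent. The starting input is that $\R(\mb{x})/\R(\mb{x})^W$ is a finite Galois extension of degree $|W|$, which forces the transcendence degree of $\R[\mb{x}]^W$ over $\R$ to be $n$; in particular $r \geq n$. To rule out every algebraic relation, the central tool is the following key lemma: if $\sum_i p_i f_i = 0$ with homogeneous $p_i \in \R[\mb{x}]$, then every $p_i$ lies in $I$. Assuming the lemma, suppose for contradiction that $h(f_1,\ldots,f_r) = 0$ is a nontrivial polynomial relation of minimal total degree, and write $h_i = \partial h/\partial y_i$. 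Differentiating via the chain rule yields the syzygy
\[\sum_{i=1}^{r} h_i(f_1,\ldots,f_r)\,\frac{\partial f_i}{\partial x_j} \;=\; 0 \qquad (1\leq j\leq n).\]
Applying the key lemma (after an application of $R$) forces some $h_i(f_1,\ldots,f_r)$ to be expressible polynomially in $f_1,\ldots,f_r$ with strictly lower degree than $h$, contradicting the minimality of $h$. This simultaneously yields algebraic independence and, combined with the transcendence-degree count, $r = n$.

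The main obstacle is the key lemma on syzygies, which is the one point where the reflection structure of $W$ enters essentially. The underlying observation is that for any reflection $\sigma_\alpha \in W$ and any $g \in \R[\mb{x}]$, the difference $g - \sigma_\alpha g$ vanishes on $H_\alpha$ and is therefore divisible by the linear form cutting out $H_\alpha$. Applying the reflections of $W$ to a putative minimal syzygy $\sum p_i f_i = 0$, combined with an induction on degree, propagates this divisibility to force each $p_i \in I$. Without the reflection hypothesis this step genuinely fails: for a general finite group action one still has finite generation by Noether's theorem, but the invariant algebra need not be polynomial, a fact witnessed by the converse direction of the Chevalley--Shephard--Todd theorem.
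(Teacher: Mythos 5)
The paper does not prove this statement; it is quoted as Chevalley's classical theorem with a citation to \cite{chevalley}, so there is no in-paper argument to compare against. Your outline is the standard Chevalley--Humphreys--Bourbaki proof (Reynolds operator plus Hilbert basis theorem for finite generation; a syzygy lemma exploiting divisibility of $g-\sigma_\alpha g$ by the linear form cutting out $H_\alpha$; a transcendence-degree count to pin down $r=n$), and both the first stage and your sketch of why the reflection hypothesis enters the key lemma are sound.

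Two points in the second stage do not close as written. First, your key lemma concerns relations $\sum p_i f_i=0$ against the chosen generators $f_i$, but the syzygy you produce by differentiating $h(f_1,\dots,f_r)=0$ is $\sum_i h_i\,\partial f_i/\partial x_j=0$, in which the invariants are the $h_i=(\partial h/\partial y_i)(f_1,\dots,f_r)$ and the unknown coefficients are the $\partial f_i/\partial x_j$; you need the asymmetric form of the lemma (if $\sum g_ih_i=0$ with each $h_i$ invariant and $h_1$ not in the ideal of $\R[\mb{x}]^W$ generated by $h_2,\dots,h_m$, then $g_1\in I$), applied after first replacing $h_1,\dots,h_r$ by a minimal subset generating the same ideal of $\R[\mb{x}]^W$ and rewriting the redundant ones in terms of it. Second, the contradiction you describe --- that some $h_i(f_1,\dots,f_r)$ becomes expressible in the $f$'s with degree lower than $h$ --- is not what the lemma yields, and is in any case not a contradiction, since the $h_i$ are by construction polynomial in the $f$'s of degree $\deg h-1$. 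The correct endgame is different: the lemma gives $\partial f_k/\partial x_j+\sum_{i>m}g_{ik}\,\partial f_i/\partial x_j\in I$ for every $j$; multiplying by $x_j$, summing over $j$, and invoking Euler's identity for homogeneous polynomials shows that $f_k$ lies in the ideal generated by the remaining $f_i$, contradicting the minimality of the chosen generating set of $I$ (the minimality of $\deg h$ is used only to guarantee that not all $h_i$ vanish). With these repairs your argument becomes the standard complete proof.
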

 
Even though a set of generators $\{f_1,\ldots, f_n\}$ for $\R[\mb{x}]^W$ is not unique, the degrees $\{d_1,\ldots, d_n\}$ are independent of the choice of generators. It is well known that the size of the Weyl group is obtained by the product of degrees $d_i$. See Table~\ref{deg-pol-inv}. 
\begin{table}[!h]
\[\begin{array}{|c|c|c|} \hline
	\textnormal{Type} & d_1, \ldots, d_n & |W|=\Pi d_i\\ \hline
	A_n & 2,3,4,\ldots, n+1 & (n+1)!\\ 
	B_n/C_n & 2,4,6\ldots, 2n & 2^nn!\\ 
	D_n & 2,4,6\ldots, 2n-2,n & 2^{(n-1)}n!  \\ 
	E_6 & 2,5,6,8,9,12 & 2^73^45  \\ 
	E_7 & 2,6,8,10,12,14,18 & 2^{10}3^55\,7 \\ 
	E_8 & 2,8,12,14,18,20,24,30 & 2^{14} 3^55^27 \\ 	
	F_4 & 2,6,8,12 & 2^73^2 \\ 
	G_2 & 2,6 & 12\\ \hline
\end{array}\]
\caption{\label{deg-pol-inv}The degrees of polynomial invariants.}
\end{table}

Another important quantity that is independent of the choice of generators is the Jacobian determinant. Recall that the Jacobian matrix is defined by
\[\frac{\partial (f_1,\ldots,f_n)}{\partial (x_1,\ldots,x_n)}= \begin{bmatrix}
	\dfrac{\partial f_1}{\partial x_1} & \cdots & \dfrac{\partial f_1}{\partial x_n}\\
	\vdots                             & \ddots & \vdots\\
	\dfrac{\partial f_n}{\partial x_1} & \cdots & \dfrac{\partial f_n}{\partial x_n}
\end{bmatrix} = \left[\frac{\partial f_i}{\partial x_j}\right].\]

A subset $\Delta$ of a root system $\Phi$ is called a base if $\Delta$ is a vector space basis of $E$ and each root can be written as $\sum k_\alpha \alpha, \alpha\in \Delta$ with integral coefficients $k_\alpha$ all nonnegative or all nonpositive. The roots in $\Delta$ are called simple. A base always exists and the root system can be partitioned into two subsets, namely the positive roots and the negative roots. The positive are denoted by $\Phi^+$.

\begin{theorem}\label{detjac} 
Fix a set of generators $\{f_1, \ldots, f_n\}$ for the algebra $\R[\mb{x}]^W$. For each $\alpha\in \Phi$, let $l_\alpha$ be a linear polynomial whose zero set is the hyperplane $H_\alpha$. Then 
\[ \det\left(\left[\frac{\partial f_i}{\partial x_j}\right]\right)= c\prod_{\alpha \in \Phi^+} l_\alpha \]
for some constant $c\in\R$, depending on the choices of $f_i$ and $l_\alpha$.
\end{theorem}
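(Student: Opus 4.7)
The plan is to prove divisibility and then match degrees. I will show that each linear form $l_\alpha$ (for $\alpha \in \Phi^+$) divides the Jacobian determinant $J = \det\left[\partial f_i / \partial x_j\right]$ in the polynomial ring $\R[\mb{x}]$, and then conclude from a degree count that $J$ is a constant multiple of $\prod_{\alpha \in \Phi^+} l_\alpha$.

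For the divisibility, the key is to differentiate the $W$-invariance of the $f_i$. Writing $F = (f_1, \ldots, f_n)^T$, the identity $F(w\gamma) = F(\gamma)$ (which expresses $wf_i = f_i$) can be differentiated via the chain rule to give $DF(w\gamma) \cdot w = DF(\gamma)$, where $w$ is identified with its matrix in the coordinate basis. Taking determinants yields $J(w\gamma) = (\det w)^{-1} J(\gamma)$. For a reflection $\sigma_\alpha$ one has $\det \sigma_\alpha = -1$, hence $J(\sigma_\alpha \gamma) = -J(\gamma)$. Specializing to $\gamma \in H_\alpha$, where $\sigma_\alpha \gamma = \gamma$, forces $J(\gamma) = 0$. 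Thus $J$ vanishes on $H_\alpha$; since $l_\alpha$ is an irreducible linear polynomial cutting out $H_\alpha$, the UFD structure of $\R[\mb{x}]$ gives $l_\alpha \mid J$. For distinct positive roots $\alpha \neq \alpha'$, axiom (R2) guarantees $\alpha' \neq \pm\alpha$, so $H_\alpha \neq H_{\alpha'}$ and the linear forms $l_\alpha, l_{\alpha'}$ are non-proportional and hence coprime. Therefore $\prod_{\alpha \in \Phi^+} l_\alpha$ divides $J$.

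The proof concludes with a degree comparison. Since each $f_i$ is homogeneous of degree $d_i$, the determinant $J$ is homogeneous of degree $\sum_{i=1}^n (d_i - 1)$, while $\prod_{\alpha \in \Phi^+} l_\alpha$ has degree $|\Phi^+|$. The classical identity $\sum_{i=1}^n (d_i - 1) = |\Phi^+|$ --- a restatement of the fact that the number of reflections in $W$ equals both sums --- shows the two degrees match, so the quotient $J / \prod_{\alpha} l_\alpha$ is a constant $c \in \R$. Algebraic independence of the $f_i$, guaranteed by Chevalley's theorem, ensures $J \neq 0$, hence $c \neq 0$. I expect the main subtlety to lie in this degree identity, which rests on deeper reflection group combinatorics rather than on the Jacobian argument itself.
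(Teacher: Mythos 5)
Your argument is correct, but note that the paper itself offers no proof of Theorem~\ref{detjac}: it is quoted as a classical fact (it is Proposition~3.13 in Humphreys, \emph{Reflection groups and Coxeter groups}, one of the paper's stated references), so there is nothing internal to compare against. What you give is precisely the standard textbook proof: anti-invariance of the Jacobian determinant via the chain rule forces vanishing on each reflecting hyperplane, the linear forms $l_\alpha$ for distinct positive roots are pairwise non-proportional by (R2) so their product divides $J$, and the identity $\sum_i(d_i-1)=|\Phi^+|$ (the count of reflections in $W$) closes the degree gap, with algebraic independence guaranteeing $J\neq 0$ via the Jacobian criterion in characteristic zero. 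The only point worth making explicit is that you tacitly take the generators $f_i$ to be homogeneous of degrees $d_i$, as in Chevalley's theorem; this is clearly the intended reading (cf.\ Table~1 of the paper), and in any case an arbitrary set of $n$ algebraically independent generators reduces to the homogeneous case because the transition Jacobian $\det\left[\partial f_i/\partial g_k\right]$ between two generating sets is an invertible constant.
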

 
Let $\mf{g}$ be a complex finite semisimple Lie algebra. It is well known that $\mf{g}$ is a direct sum of simple Lie algebras: $A_n, B_n, C_n, D_n, E_6, E_7, E_8, F_4, G_2$.  The root space decomposition of $\mf{g}$ comes with a natural root system $\Phi$ attached to $\mf{g}$.  The axiom (R4) can be stated in a simpler fashion by introducing coroots. For any root $\alpha$ in $\Phi$, the associated coroot is defined by
\[\alpha^\vee = \frac{2\alpha}{(\alpha , \alpha)}.\]
The axiom (R4) is equivalent to $(\beta,\alpha^\vee)\in \Z$ for all $\alpha,\beta \in \Phi$. 

Fix an ordering $(\alpha_1, \ldots, \alpha_n)$ of simple roots. The matrix 
\[[ \langle \alpha_i , \alpha_j \rangle]=[ (\alpha_i , \alpha_j^\vee )]\]
is called the Cartan matrix of $\Phi$. Its entries are called the Cartan integers. The Dynkin diagram is an alternative object which includes the same information as the Cartan matrix. We will be using the Cartan matrices and Dynkin diagrams of \cite{hump-lie}. For example, the rank two simple Lie algebras are listed as follows:
\[\begin{array}{ccc} 
 A_2 & B_2 & G_2 \\ 
 \includegraphics[scale=0.5]{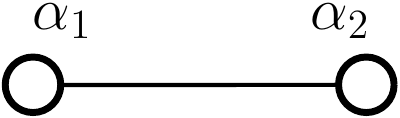}
& \includegraphics[scale=0.5]{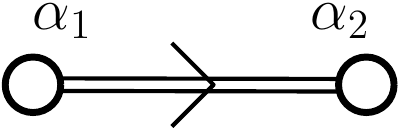}& \includegraphics[scale=0.5]{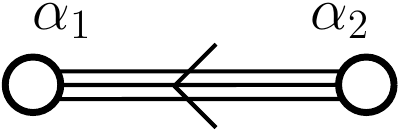}\\ 
\quad \left[\begin{array}{cc}2&-1\\-1&2\end{array}\right] \quad&
\quad \left[\begin{array}{cc}2&-2\\-1&2\end{array}\right] \quad&
\quad \left[\begin{array}{cc}2&-1\\-3&2\end{array}\right] \quad \\ 
\end{array}\]

The representation theory of Lie algebras has a central theme, namely the highest weight. We will be using the Weyl character formula, 
Theorem~\ref{Weyl-Char-Form}, as a main tool. Thus the fundamental weights $\omega_i$ are essential for us. They are defined by the following equation:
\[(\omega_i , \alpha_j^\vee) = \delta_{ij}\]
Here $\delta_{ij}$ is the Kroneckter delta function and $1 \leq i,j \leq n$. Note that the Cartan matrix transforms the fundamental weights into the simple roots. The following basic fact will be used several times to manage some important matrix multiplications. 

\begin{lemma}\label{inner} 
	The inner product $(\lambda,\gamma)$ can be computed by the identity
	\[(\lambda,\gamma) = \sum_{m=1}^{n}(\lambda,\alpha_m^\vee)(\omega_m, \gamma).\]
\end{lemma}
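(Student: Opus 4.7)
The plan is to exploit the fact that the fundamental weights $\{\omega_1, \ldots, \omega_n\}$ and the simple coroots $\{\alpha_1^\vee, \ldots, \alpha_n^\vee\}$ constitute dual bases of $E$ with respect to the bilinear form $(\cdot,\cdot)$, as encoded by the defining relation $(\omega_i, \alpha_j^\vee) = \delta_{ij}$. Once this duality is in hand, the identity is nothing more than the standard reproducing formula for an inner product expressed in a pair of mutually dual bases.

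Concretely, I would first note that since the simple roots $\alpha_1, \ldots, \alpha_n$ form a basis of $E$, so do the simple coroots $\alpha_1^\vee, \ldots, \alpha_n^\vee$, and consequently the fundamental weights $\omega_1, \ldots, \omega_n$ form a basis of $E$ as well. Expand $\lambda$ uniquely as $\lambda = \sum_{i=1}^n a_i \omega_i$; pairing both sides with $\alpha_m^\vee$ and invoking $(\omega_i, \alpha_m^\vee) = \delta_{im}$ yields $a_m = (\lambda, \alpha_m^\vee)$ at once. Substituting these coefficients back gives $\lambda = \sum_m (\lambda, \alpha_m^\vee)\, \omega_m$, and taking the inner product with $\gamma$ on both sides produces the claimed formula by bilinearity. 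There is no genuine obstacle here; the only point to verify is the duality of the two bases, which is built directly into the definition of the $\omega_i$.
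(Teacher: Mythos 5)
Your proof is correct and follows essentially the same route as the paper: both rest on the duality $(\omega_i,\alpha_j^\vee)=\delta_{ij}$ and the expansion $\lambda=\sum_m(\lambda,\alpha_m^\vee)\omega_m$. The only cosmetic difference is that the paper also expands $\gamma$ in the coroot basis before pairing, whereas you pair directly with $\gamma$ by bilinearity; the underlying argument is identical.
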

\begin{proof}
We write $\lambda=\sum_{i=1}^{n} a_i\omega_i$ and $\gamma=\sum_{j=1}^{n} b_j\alpha_j^\vee$. Using $(\omega_i , \alpha_j^\vee) = \delta_{ij}$, we see that $a_m=(\lambda, \alpha_m^\vee)$ and $b_m=(\omega_m,\gamma)$. Therefore $(\lambda,\gamma) = \sum_{m=1}^{n}a_mb_m$. 
\end{proof}

The hyperplanes $H_\alpha$ partition $E$ into finitely many regions. One of them has a special name. The fundamental Weyl chamber, relative to $\Delta$, denoted $\mathfrak{C}$, is the open convex set consisting of all $\gamma\in E$ which satisfy the inequalities $(\gamma,\alpha) > 0, \alpha\in\Delta$. 

Let $s_i$ be the size of the orbit $W(\omega_i)$. Since $(\omega_i , \alpha_j^\vee) = \delta_{ij}$, the stabilizer group $\Stab(\omega_i)$ is given by $W_i = \langle \sigma_{\alpha_j} \mathrel{|} \alpha_j\in \Delta\setminus\{\alpha_i\}\rangle$. It turns out that $W_i$ is the Weyl group of the root system with base $\Delta\setminus\{\alpha_i\}$. The orbit-stabilizer formula implies that $s_i = |W|/|W_i|$.

The length of $w$ (relative to $\Delta$) is the smallest integer $r$ for which $w$ can be expressed as $w=\sigma_1\cdots\sigma_r$ with simple reflections $\sigma_{\alpha_i}$ with $\alpha_i\in \Delta$. The number of positive roots sent to the negative roots by $w\in W$ is equal to the length of $w$.

There is a special weight $\rho=\omega_1+\ldots+\omega_n$ that appears frequently. It turns out that $\rho$ is equal to the half of the sum of positive roots. We observe that $\Stab(\rho)=\{1\}$ since it can contain only elements of length zero. Similarly $\Stab(\rho-\omega_i)$ can contain only elements of length less than or equal to one. On the other hand $\sigma_{\alpha_i} (\rho-\omega_i) = \rho-\omega_i$. Thus we conclude that $\Stab(\rho-\omega_i) = \{1,\sigma_{\alpha_i}\}$. 

The following fact enables us to realize the Weyl group as a subgroup of matrices with integer entries. 

\begin{lemma}\label{glz}
	Set $T_w=[(\omega_i,w(\alpha_j^\vee))]$ for each $w\in W$. Then the map $w \mapsto T_w$ is an injective group homomorphism from $W$ into \textnormal{GL}$(n,\mathbf{Z})$. Moreover $\det(w)=\det(T_w)$.
\end{lemma}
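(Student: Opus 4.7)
The plan is to recognize $T_w$ as nothing other than the matrix of the linear operator $w\colon E\to E$ expressed in the basis of simple coroots $\{\alpha_1^\vee,\ldots,\alpha_n^\vee\}$. Once this identification is made, every conclusion of the lemma becomes either automatic or a short computation. The crucial observation is that if we write $w(\alpha_j^\vee)=\sum_k c_{kj}\alpha_k^\vee$, then pairing with $\omega_i$ and using the duality $(\omega_i,\alpha_k^\vee)=\delta_{ik}$ immediately forces $(T_w)_{ij}=c_{ij}$.

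I would begin by establishing integrality, i.e.\ that $T_w$ lands in $\mathrm{GL}(n,\mathbf{Z})$. Starting from the reflection formula, a short calculation gives $\sigma_{\alpha_i}(\alpha_j^\vee)=\alpha_j^\vee-\langle\alpha_i,\alpha_j\rangle\alpha_i^\vee$, so each simple reflection sends every simple coroot to an integer linear combination of simple coroots (the Cartan integers). Since every $w\in W$ is a product of simple reflections, induction on the length of $w$ shows $c_{kj}\in\mathbf{Z}$, hence $T_w$ has integer entries.

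For the homomorphism property $T_{w_1w_2}=T_{w_1}T_{w_2}$, I plan to feed Lemma~\ref{inner} the right inputs. Using the orthogonality of the Weyl-group action, I would rewrite
\[(T_{w_1w_2})_{ij}=(\omega_i,w_1w_2\alpha_j^\vee)=(w_1^{-1}\omega_i,\,w_2\alpha_j^\vee),\]
and then apply Lemma~\ref{inner} with $\lambda=w_1^{-1}\omega_i$ and $\gamma=w_2\alpha_j^\vee$. The resulting sum is
\[\sum_{m=1}^{n}(w_1^{-1}\omega_i,\alpha_m^\vee)(\omega_m,w_2\alpha_j^\vee)=\sum_{m=1}^{n}(\omega_i,w_1\alpha_m^\vee)(\omega_m,w_2\alpha_j^\vee),\]
which is precisely $(T_{w_1}T_{w_2})_{ij}$. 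Injectivity is then immediate from the same identification: if $T_w=I$, then $c_{kj}=\delta_{kj}$, so $w$ fixes each simple coroot; since the simple coroots span $E$, one concludes $w=1$.

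For the determinant equality, I would simply invoke the basis-independence of the determinant: $T_w$ represents the $\mathbf{R}$-linear operator $w\colon E\to E$ in the simple coroot basis, so $\det(T_w)=\det(w)$. I do not anticipate any serious obstacle; the only conceptual step is noticing at the outset that the formula $(\omega_i,w(\alpha_j^\vee))$ reads off the coordinates of $w(\alpha_j^\vee)$ in the simple coroot basis. The single place where one could slip is in the homomorphism verification if one tried to avoid Lemma~\ref{inner}, which would force awkward simultaneous handling of the two dual bases $\{\omega_i\}$ and $\{\alpha_j^\vee\}$.
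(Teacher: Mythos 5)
Your proof is correct, and its core steps coincide with the paper's: the homomorphism property is verified by exactly the same manipulation (isometry of $w$ followed by Lemma~\ref{inner}), and injectivity is argued identically from the fact that the simple coroots span $E$. Where you genuinely diverge is in the treatment of integrality and of the determinant. The paper works on the weight side, computing $\sigma_{\alpha_i}(\omega_j)=\omega_j-\delta_{ij}\alpha_i$ so that $T_{\sigma_{\alpha_i}}$ is the identity minus the $i$th row of the Cartan matrix, reads off integer entries and determinant $-1$ for each generator, and then (implicitly) propagates both facts to all of $W$ via the homomorphism property. You instead work on the coroot side, observing up front that $(\omega_i,w(\alpha_j^\vee))$ is the $(i,j)$ coordinate of $w$ in the simple-coroot basis; your formula $\sigma_{\alpha_i}(\alpha_j^\vee)=\alpha_j^\vee-\langle\alpha_i,\alpha_j\rangle\alpha_i^\vee$ is the dual computation and produces the very same matrices, so integrality again reduces to the generators. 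The real payoff of your framing is the determinant claim: since $T_w$ is literally the matrix of the operator $w$ in a basis of $E$, the identity $\det(T_w)=\det(w)$ holds for every $w$ at once by basis-independence of the determinant, with no need to compute $\det(T_{\sigma_{\alpha_i}})=-1$ and multiply over a reduced expression. (The same observation would also have given you $T_{w_1w_2}=T_{w_1}T_{w_2}$ for free, making the Lemma~\ref{inner} computation optional rather than essential.) Both routes are sound; yours is marginally more conceptual, the paper's is more explicit and sets up the row-vector action on weights that is reused later in the manuscript.
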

\begin{proof}
	The map $w\mapsto T_w$ is a group homomorphism because 
	\begin{align*}
		T_{w}T_{w'} &= [(\omega_i,w(\alpha_j^\vee))][(\omega_i,w'(\alpha_j^\vee))] \\
		& = [(w^{-1}(\omega_i),\alpha_j^\vee)][(\omega_i,w'(\alpha_j^\vee))]\\
		& = [(w^{-1}(\omega_i),w'(\alpha_j^\vee)]\\
		& = [(\omega_i,ww'(\alpha_j^\vee)]\\
		& = T_{ww'}.
	\end{align*}
Here, the second and the fourth equalities hold since $w$ is an isometry. The third equality is obtained by applying Lemma~\ref{inner}. If $T_w=[(\omega_i,w(\alpha_j^\vee))]=[\delta_{ij}]$ then $w(\alpha_j^\vee) = \alpha_j^\vee$ for each $j$. Since coroots span $E$, we must have $w=1$. The Cartan matrix transforms the fundamental weights into the simple roots, and $\sigma_{\alpha_i}(\omega_j) = \omega_j - \delta_{ij}\alpha_i$ for each $\alpha_i \in \Delta$. Thus, the matrix $T_{w}$ for $w=\sigma_{\alpha_i}$ is obtained by subtracting the $i$th row of the Cartan matrix from the identity matrix. Such a matrix has integer entries and has determinant minus one. The Weyl group is generated by $\sigma_{\alpha_i}$, and therefore $T_w$ has integer entries for each $w\in W$.
\end{proof}

\begin{example} Suppose that the root system has type $G_2$ with the Cartan matrix above. In this case, the inverse of Cartan matrix has integer entries. We have	
\[ \begin{array}{rcl}	
		\alpha_1 &=& 2\omega_1 - \omega_2,\\
		\alpha_2 &=& -3\omega_1 +2\omega_2,\end{array}
	\quad \text{ and } \quad
	\begin{array}{rcl}	
		\omega_1 &=& 2\alpha_1 + \alpha_2,\\
		\omega_2 &=& 3\alpha_1 + 2\alpha_2.\end{array} 	
	\]
The Weyl group $W$ is generated by the reflections $\sigma_{\alpha_1}$ and $\sigma_{\alpha_2}$ and it has 12 elements. It can be realized as a subgroup of GL$(2,\Z)$ with the following generators: 
$$T_{\sigma_{\alpha_1}} = \left[\begin{array}{cc}-1&1\\0&1\end{array}\right], \textnormal{ and } \quad T_{\sigma_{\alpha_2}} = \left[\begin{array}{cc}1&0\\3&-1\end{array}\right].$$
The orbits of $\omega_1$ and $\omega_2$, both with 6 elements, are
\[W(\omega_1)=\{\omega_1,-\omega_1,\omega_1-\omega_2,-\omega_1+\omega_2,2\omega_1-\omega_2,-2\omega_1+\omega_2\},\]
\[W(\omega_2)=\{\omega_2,-\omega_2,3\omega_1-\omega_2,-3\omega_2+\omega_2,3\omega_1-2\omega_2,-3\omega_1+2\omega_2\}.\]
Note that $\Stab(\omega_1)=\{1,\sigma_2\}$ and $\Stab(\omega_2)=\{1,\sigma_1\}$.

The fundamental Weyl chamber $\mathfrak{C}$ relative to $\Delta=\{\alpha_1, \alpha_2\}$ is highlighted with gray color in Figure~\ref{fig:RootG2}. 
\begin{figure}[htbp]
	\centering
	\includegraphics[scale=0.8]{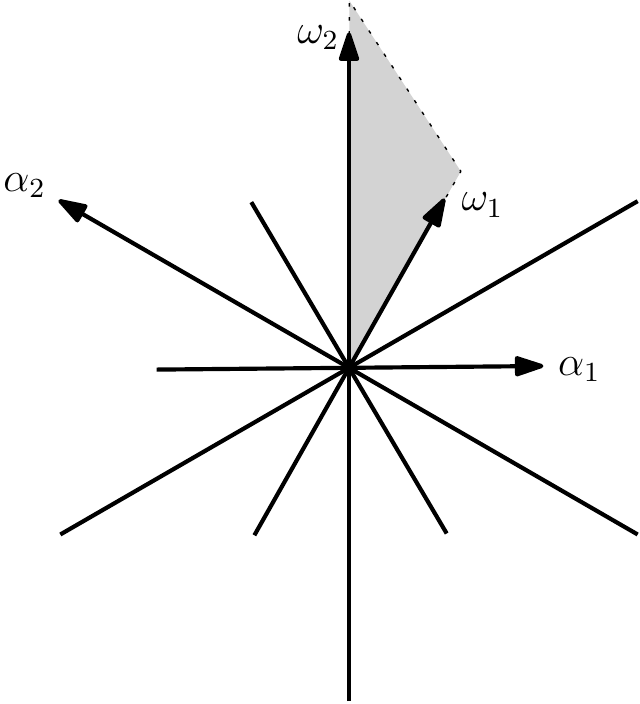}
	\caption{The root system for $G_2$.}
	\label{fig:RootG2}
\end{figure}	
\end{example}

\section{Exponential Invariants}
The main reference for this section is Bourbaki \cite[Ch. 6, \S3]{Bourbaki}. Let $\Lambda$ be the free abelian group generated by the fundamental weights. The group algebra of $\Lambda$ over a unique factorization domain $A$ is denoted by $A[\Lambda]$. It consists of formal sums
$$a=\sum_{\lambda\in \Lambda} a_\lambda e^\lambda$$ 
with coefficients $a_\lambda\in A$. The exponential notation is used to distinguish two different additive structures. We have
\[e^\lambda e^{\lambda'} = e^{\lambda+\lambda'},\quad (e^\lambda)^{-1}=e^{-\lambda},\quad e^0=1.\] 
The Weyl group acts on $\Lambda$ and therefore on the group algebra by $w( e^\lambda) = e^{w (\lambda)}$. 

A fixed ordering of simple roots $\alpha_i \in \Delta$ provides a partial order on $E$. If $\lambda, \lambda' \in \Lambda$, then $\lambda \geq \lambda'$ if and only if $\lambda-\lambda'$ is a linear combination of $\alpha_i$ with nonnegative coefficients. Let $a=\sum_{\lambda\in\Lambda}a_\lambda e^\lambda$ be an element of $A[\Lambda]$. The set $S$ of $\lambda \in \Lambda$ such that $a_\lambda \neq 0$ is called the support of $a$ and the set $M$ of maximal terms of $S$ is called the maximal support of $a$. A term $a_\lambda e^\lambda$ with $\lambda \in M$ is called a maximal term of $a$.  

\begin{lemma}\label{maxterm}
Let $a$ be an element of $A[\Lambda]$ with maximal terms $M_a=\{a_\lambda e^\lambda\mathrel{|} \lambda \in X\}$. If $b$ is an element of $A[\Lambda]$ with unique maximal term $e^\mu$, then the product $ab$ has maximal terms $\{a_\lambda e^{\lambda+\mu}\mathrel{|} \lambda \in X\}$
\end{lemma}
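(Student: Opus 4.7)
The plan is to expand $ab$ by distributivity and analyze its support via the partial order on $\Lambda$. Writing $a = \sum_{\lambda \in S_a} a_\lambda e^\lambda$ with maximal support $X \subseteq S_a$, and $b = \sum_{\nu \in S_b} b_\nu e^\nu$ with unique maximum $\mu$ of $S_b$ and coefficient $b_\mu = 1$ (so that the maximal term of $b$ is literally $e^\mu$), the coefficient of $e^\gamma$ in $ab$ is $c_\gamma = \sum a_\lambda b_\nu$, where the sum runs over pairs $(\lambda,\nu) \in S_a \times S_b$ with $\lambda + \nu = \gamma$. The argument then breaks into an upper-bound step on $\text{supp}(ab)$ and a non-vanishing step for the candidate maximal terms.

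First I would bound $\text{supp}(ab)$ from above. For any $\gamma$ in this support there exists a contributing pair $(\lambda',\nu')$; by definition of maximal support $\lambda' \leq \lambda$ for some $\lambda \in X$, and by hypothesis $\nu' \leq \mu$. Because the partial order on $\Lambda$ is defined via nonnegativity of coefficients on simple roots, it is compatible with addition, so $\gamma = \lambda' + \nu' \leq \lambda + \mu$. Hence every point of $\text{supp}(ab)$ sits below some $\lambda + \mu$ with $\lambda \in X$.

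Second I would verify that each such candidate $\lambda + \mu$ actually appears in $\text{supp}(ab)$ with the correct coefficient. Fixing $\lambda \in X$ and inspecting $c_{\lambda + \mu}$: any contributing pair $(\lambda',\nu)$ satisfies $\lambda' - \lambda = \mu - \nu \geq 0$, so $\lambda' \geq \lambda$. Since $\lambda'$ itself lies below some element of $X$, and since distinct elements of $X$ are pairwise incomparable (otherwise one of them would fail to be maximal in $S_a$), this forces $\lambda' = \lambda$ and hence $\nu = \mu$. Thus $c_{\lambda + \mu} = a_\lambda b_\mu = a_\lambda \neq 0$. Combined with the first step, this identifies $\{\lambda + \mu \mathrel{|} \lambda \in X\}$ as precisely the maximal support of $ab$, with maximal terms $a_\lambda e^{\lambda+\mu}$.

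The main obstacle is really the uniqueness hypothesis on $\mu$: if $b$ had several maximal terms, the coefficient $c_{\lambda + \mu}$ could collect contributions from several pairings of non-maximal terms of $a$ with maximal terms of $b$, and cancellation would be possible. With uniqueness, the decomposition $\lambda + \mu = \lambda' + \nu$ with $\nu \leq \mu$ is forced to be trivial, and the argument reduces to the straightforward bookkeeping above, exploiting that translation by the fixed element $\mu$ is an order-preserving bijection on $\Lambda$.
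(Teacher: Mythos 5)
Your proof is correct and complete. Note that the paper itself offers no proof of Lemma~\ref{maxterm}: it is quoted as standard background from Bourbaki, so there is nothing to compare your argument against; what you have written is the standard argument and it fills the gap correctly. The two points that carry it are exactly the ones you isolate: translation-invariance of the partial order gives the upper bound that every element of the support of $ab$ lies below some $\lambda+\mu$ with $\lambda\in X$, while for a pair $(\lambda',\nu)$ contributing to the coefficient of $e^{\lambda+\mu}$ the inequality $\lambda'\ge\lambda$, combined with the pairwise incomparability of the maximal elements of the support of $a$, forces $(\lambda',\nu)=(\lambda,\mu)$, so that coefficient equals $a_\lambda\neq 0$ with no possibility of cancellation. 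The only step you use silently is that in a finite poset every element lies below some maximal element, which is what turns ``unique maximal term $e^\mu$'' into ``$\nu\le\mu$ for every $\nu$ in the support of $b$'' (and likewise bounds the support of $a$ by $X$); since supports in $A[\Lambda]$ are finite this is immediate, but it is the one place where finiteness enters and deserves a word.
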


Recall that $W$ is a subgroup of GL$(E)$. We have $\det(w)=\pm 1$ for each $w$ in $W$, since $W$ is generated by reflections. 

\begin{definition}
	An element $a\in A[\Lambda]$ is said to be anti-invariant under $W$ if $w(a)=\det(w)a$ for all $w\in W$.
\end{definition}

The anti-invariant elements of $A[\Lambda]$ form a submodule. For any $a\in A[\Lambda]$, put 
\[J(a)=\sum_{w\in W} \det(w)w(a).\]
If $|W|$ is invertible in $A$, then $\frac{1}{|W|}J$ is a projection from $A[\Lambda]$ onto the submodule of anti-invariant elements. The fundamental Weyl chamber $\mathfrak{C}$, relative to $\Delta$, enables us to write a natural basis for the submodule of anti-invariant elements of $A[\Lambda]$. 

\begin{lemma}
If $\lambda\in\Lambda \cap \mf{C}$, then $w(\lambda) < \lambda$ for all $w\neq 1$ and $e^\lambda$ is the unique maximal term of $J(e^\lambda)$. Moreover, the elements $J(e^\lambda)$ form a basis of the submodule of anti-invariant elements of $A[\Lambda]$.
\end{lemma}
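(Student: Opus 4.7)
The plan is to establish the three assertions in order, as each builds on the previous. For the strict inequality $w(\lambda)<\lambda$ with $\lambda\in \Lambda\cap \mf{C}$ and $w\neq 1$, I would argue by induction on the length $\ell(w)$ of $w$. When $\ell(w)\geq 1$, $w$ carries some positive root to a negative one, so there exists a simple root $\alpha_i$ with $w(\alpha_i)\in\Phi^-$; set $w'=w\sigma_{\alpha_i}$, which has length $\ell(w)-1$. A direct computation gives
\[w(\lambda) \;=\; w'\sigma_{\alpha_i}(\lambda) \;=\; w'(\lambda) - \langle\lambda,\alpha_i\rangle\, w'(\alpha_i),\]
where $w'(\alpha_i) = -w(\alpha_i)$ is a positive root. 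By induction (in the nonstrict form for the reduction step), $\lambda - w'(\lambda)$ is a nonnegative integer combination of positive roots; since $(\lambda,\alpha_i)>0$ for $\lambda\in\mf{C}$, we have $\langle\lambda,\alpha_i\rangle>0$, so adding the positive term $\langle\lambda,\alpha_i\rangle\,w'(\alpha_i)$ yields $\lambda - w(\lambda) > 0$ strictly in the root partial order.

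The unique-maximal-term claim follows immediately. Since $\lambda$ is regular, the reasoning in the text about $\Stab(\rho)$ applies: the isotropy subgroup is generated by reflections $\sigma_\alpha$ with $\lambda\in H_\alpha$, and there are none, so $\Stab(\lambda)=\{1\}$. Hence the exponents $w(\lambda)$ for $w\in W$ are pairwise distinct and $J(e^\lambda) = \sum_{w\in W} \det(w)\, e^{w(\lambda)}$ is a sum of $|W|$ distinct nonzero terms, with $\lambda$ the unique maximal exponent by the first part.

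For the basis assertion, linear independence follows from Lemma~\ref{maxterm}: in any finite relation $\sum c_\lambda J(e^\lambda) = 0$ indexed by distinct $\lambda\in\Lambda\cap\mf{C}$, choosing $\lambda_0$ maximal among those with $c_{\lambda_0}\neq 0$ gives a nonzero maximal term $c_{\lambda_0} e^{\lambda_0}$ in the sum, a contradiction. For spanning, suppose $a=\sum a_\lambda e^\lambda$ is anti-invariant; for each simple root $\alpha_i$, applying $\sigma_{\alpha_i}(a)=-a$ and comparing the coefficient of $e^\lambda$ when $\lambda\in H_{\alpha_i}$ forces $a_\lambda=0$, and more generally $a_\lambda=0$ whenever $\lambda$ lies on any reflecting hyperplane. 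Each remaining (regular) $\lambda$ can be written uniquely as $w(\mu)$ with $w\in W$ and $\mu\in\Lambda\cap\mf{C}$, and the anti-invariance of $a$ forces $a_{w(\mu)}=\det(w)\,a_\mu$. A term-by-term comparison of coefficients then verifies
\[a \;=\; \sum_{\mu\in\Lambda\cap\mf{C}} a_\mu\, J(e^\mu),\]
where the sum is finite because the support of $a$ is finite. The main obstacle is the first step, since the strict inequality requires careful bookkeeping with the reduction $w\mapsto w\sigma_{\alpha_i}$ and the observation that $w'(\alpha_i)$ becomes positive precisely because of the sign-flip selection of $\alpha_i$; once this is in hand, the unique-maximal-term statement and the basis statement follow essentially formally.
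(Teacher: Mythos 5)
The paper does not actually prove this lemma---it is quoted from Bourbaki \cite[Ch.~6, \S3]{Bourbaki} without argument---so there is no internal proof to compare against; judged on its own, your proof is essentially the standard (Bourbaki/Humphreys) argument and is correct. The induction on $\ell(w)$ for the strict inequality, the deduction that $\Stab(\lambda)=\{1\}$ so that $J(e^\lambda)$ has $|W|$ distinct exponents with $e^\lambda$ maximal, and the independence argument via a maximal element of the support of a putative relation are all sound (though the independence step uses the unique-maximal-term property directly rather than Lemma~\ref{maxterm}, which concerns products). One caveat on the spanning step: from $\sigma_{\alpha}(a)=-a$ and $\sigma_\alpha(\lambda)=\lambda$ you only get $2a_\lambda=0$, so concluding $a_\lambda=0$ requires that $2$ be nonzero in $A$. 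This is not a defect of your reasoning so much as of the statement as generalized to an arbitrary UFD: over $A=\F_2$ the lemma is false (already in rank one, $e^0$ is anti-invariant but not in the span of the $J(e^{k\omega_1})$, $k\geq 1$). It would be worth recording explicitly that the spanning assertion needs $\operatorname{char}A\neq 2$, or that one should work over $\Z$ as Bourbaki does and extend scalars.
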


Recall that $\rho = \omega_1+\ldots+\omega_n$. The element $J(e^\rho)$ is a common divisor of anti-invariant elements. Conversely, the multiplication by $J(e^\rho)$ is a bijection from the submodule of invariant elements $A[\Lambda]^W$ to the submodule of anti-invariant elements. In particular, $J(e^{\rho+\lambda})/J(e^\rho)$ is a $W$-invariant element with unique maximal term $e^\lambda$. Alternatively, define
\[S(e^\lambda) = \sum_{\mu \in W(\lambda)}e^\mu.\]
The element $S(e^\lambda)$ is also a $W$-invariant element with unique maximal term $e^\lambda$. Both families form a basis for the submodule $A[\Lambda]^W$.
\begin{lemma}
If $\lambda\in\Lambda \cap \overline{\mf{C}}$, then $w(\lambda) \leq \lambda$ for all $w\in W$, and $e^\lambda$ is the unique maximal term of $S(e^\lambda)$ (or $J(e^{\rho+\lambda})/J(e^\rho)$). Moreover, the elements $S(e^\lambda)$ (or $J(e^{\rho+\lambda})/J(e^\rho)$) for $\lambda\in\Lambda \cap \overline{\mf{C}}$ form a basis of the submodule $A[\Lambda]^W$.
\end{lemma}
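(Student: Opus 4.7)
My proof plan proceeds in three steps: first the inequality $w(\lambda)\leq\lambda$, then the maximal-term claim, then the basis property; in each step I would handle $S(e^\lambda)$ first and then transfer the conclusion to $J(e^{\rho+\lambda})/J(e^\rho)$.

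For the inequality, I would induct on the length $\ell(w)$ and prove the stronger statement that $\lambda-w(\lambda)$ is a non-negative integral combination of simple roots. The base case $\ell(w)=0$ is trivial. For $\ell(w)>0$, I would pick a simple root $\alpha_i$ with $\ell(\sigma_{\alpha_i}w)<\ell(w)$, set $w'=\sigma_{\alpha_i}w$ so that $w=\sigma_{\alpha_i}w'$, and use the standard length-reduction criterion to conclude $(w')^{-1}(\alpha_i)>0$. The identity $\sigma_{\alpha_i}(w'(\lambda))=w'(\lambda)-\langle w'(\lambda),\alpha_i^\vee\rangle\alpha_i$ then gives
\[\lambda-w(\lambda)=\bigl(\lambda-w'(\lambda)\bigr)+\langle w'(\lambda),\alpha_i^\vee\rangle\,\alpha_i,\]
reducing matters to checking that the coefficient is non-negative. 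Since $w'$ is an isometry, this coefficient equals $\langle\lambda,((w')^{-1}\alpha_i)^\vee\rangle$, and $\beta:=(w')^{-1}(\alpha_i)$ is a positive root. Expanding $\beta$ as a non-negative sum of simple roots and using $\lambda\in\overline{\mf{C}}$ (so $(\lambda,\alpha_j)\geq 0$ for every simple $\alpha_j$) completes the induction. Given this, the maximal-term statement for $S(e^\lambda)=\sum_{\mu\in W(\lambda)}e^\mu$ is immediate. For $J(e^{\rho+\lambda})/J(e^\rho)$, I would note that $(\rho+\lambda,\alpha_j)=1+(\lambda,\alpha_j)>0$ for every simple $\alpha_j$, so $\rho+\lambda\in\mf{C}$, and the previous lemma gives unique maximal terms $e^{\rho+\lambda}$ and $e^\rho$ for $J(e^{\rho+\lambda})$ and $J(e^\rho)$. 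Applying Lemma~\ref{maxterm} to the factorization $J(e^{\rho+\lambda})=J(e^\rho)\cdot\bigl(J(e^{\rho+\lambda})/J(e^\rho)\bigr)$ then forces the unique maximal term of the quotient to be $e^{(\rho+\lambda)-\rho}=e^\lambda$.

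Finally, for the basis claim I would invoke the fact that $\overline{\mf{C}}$ is a fundamental domain for the $W$-action on $E$, so each $W$-orbit in $\Lambda$ meets $\Lambda\cap\overline{\mf{C}}$ in exactly one point. Consequently the supports of the $S(e^\lambda)$ are pairwise disjoint, yielding linear independence; spanning follows because any $a=\sum a_\mu e^\mu\in A[\Lambda]^W$ has $a_\mu$ constant on each $W$-orbit, and grouping by orbits writes $a=\sum_{\lambda\in\Lambda\cap\overline{\mf{C}}}a_\lambda S(e^\lambda)$ with only finitely many non-zero terms. The analogous statement for $J(e^{\rho+\lambda})/J(e^\rho)$ follows by an upper-triangular change-of-basis argument: the maximal-term result and the bound $w(\lambda)\leq\lambda$ imply that each $J(e^{\rho+\lambda})/J(e^\rho)$ equals $S(e^\lambda)$ plus a combination of $S(e^\mu)$ with $\mu<\lambda$, so the transition matrix (with respect to any linear refinement of the partial order on $\Lambda\cap\overline{\mf{C}}$) is upper triangular with $1$'s on the diagonal, hence invertible. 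The only delicate point is the bookkeeping in the length induction, which rests on the standard reduced-expression criterion for Weyl groups that I take as granted from Humphreys.
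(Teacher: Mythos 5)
Your proof is correct. The paper itself gives no proof of this lemma (it is quoted as standard material from Bourbaki, Ch.~6, \S 3), and your argument --- the length induction for $w(\lambda)\leq\lambda$ via the positivity of $(w')^{-1}(\alpha_i)$, the transfer to the Weyl quotients through Lemma~\ref{maxterm}, the orbit-sum argument for the $S$-basis, and the unitriangular change of basis for the $J$-quotients --- is exactly the standard route the paper implicitly relies on. Only two cosmetic slips: in the paper's notation the coefficient in the reflection formula is $\langle w'(\lambda),\alpha_i\rangle=(w'(\lambda),\alpha_i^\vee)$ rather than $\langle w'(\lambda),\alpha_i^\vee\rangle$, and the identity $(\rho+\lambda,\alpha_j)=1+(\lambda,\alpha_j)$ should be stated with $\alpha_j^\vee$ (though $\rho+\lambda\in\mf{C}$ follows either way).
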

The finite sets $\{S(e^{\omega_i}) \mathrel{|} 1 \leq i \leq n\}$ or $\{J(e^{\rho+\omega_i})/J(e^\rho) \mathrel{|} 1 \leq i \leq n\}$ both generate $A[\Lambda]^W$ as an algebra. We refer to the following theorem as the exponential form of Theorem~\ref{chevalley}.
\begin{theorem}\cite[Ch.~6, \S3, Th.~1]{Bourbaki}\label{expformche}
Let $\omega_1, \ldots, \omega_n$ be the fundamental weights corresponding to the chamber $\mf{C}$, and, for $1\leq i \leq n$, let $x_i$ be an element of $A[\Lambda]^W$ with $e^{\omega_i}$ as its unique maximal term.  Let $$\varphi:A[X_1,\ldots,X_n] \rightarrow A[\Lambda]^W $$
be the homomorphism from the polynomial algebra $A[X_1,\ldots,X_n]$ to $A[\Lambda]^W$ that takes $X_i$ to $x_i$. Then, the map $\varphi$ is an isomorphism.
\end{theorem}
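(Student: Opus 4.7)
The plan is to prove that $\varphi$ is both injective and surjective by exploiting a triangular structure: each monomial $x^{\mathbf{k}}:=x_1^{k_1}\cdots x_n^{k_n}$ has a unique maximal term, and these maximal weights range bijectively over $\Lambda\cap\overline{\mf{C}}$. The sole tool needed is Lemma~\ref{maxterm}.

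First, iterating Lemma~\ref{maxterm} (the property that $x_i$ has unique maximal term $e^{\omega_i}$ is preserved under taking products) shows that $\varphi(X_1^{k_1}\cdots X_n^{k_n}) = x^{\mathbf{k}}$ has $e^{\sum_i k_i\omega_i}$ as its unique maximal term for every $\mathbf{k}=(k_1,\ldots,k_n)$ with $k_i\geq 0$. Since $(\omega_i,\alpha_j^\vee)=\delta_{ij}$, we have $\Lambda\cap\overline{\mf{C}}=\{\sum_i k_i\omega_i : k_i\in\Z_{\geq 0}\}$, so $\mathbf{k}\mapsto\sum_i k_i\omega_i$ is a bijection onto the indexing set of the basis $\{S(e^\lambda)\}$ of $A[\Lambda]^W$ provided by the preceding lemma. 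For injectivity, suppose $P=\sum_{\mathbf{k}} c_{\mathbf{k}}X^{\mathbf{k}}$ is a nonzero element of $\ker\varphi$, and pick $\mathbf{k}^*$ so that $\lambda^*:=\sum_i k^*_i\omega_i$ is maximal, in the partial order on $\Lambda$, inside the finite set $\{\sum_i k_i\omega_i : c_{\mathbf{k}}\neq 0\}$. The monomial $e^{\lambda^*}$ can appear in $\varphi(X^{\mathbf{k}'})$ only when $\sum_i k'_i\omega_i\geq\lambda^*$, and maximality of $\lambda^*$ forces $\mathbf{k}'=\mathbf{k}^*$. Hence the coefficient of $e^{\lambda^*}$ in $\varphi(P)$ equals $c_{\mathbf{k}^*}\neq 0$, contradicting $\varphi(P)=0$.

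For surjectivity, I argue by noetherian induction on the support. Given $f\in A[\Lambda]^W$, any maximal weight $\lambda$ in its support must lie in $\overline{\mf{C}}$: the $W$-invariance makes the support a union of $W$-orbits, each orbit has a unique dominant representative that dominates its Weyl translates in the partial order, and a maximal weight of the support can only be such a representative. Writing $\lambda=\sum_i k_i\omega_i$ with $k_i\geq 0$ and letting $a_\lambda$ denote the coefficient of $e^\lambda$ in $f$, the difference $f-a_\lambda\varphi(X_1^{k_1}\cdots X_n^{k_n})$ remains $W$-invariant, cancels $e^\lambda$ (and hence all of its $W$-translates), and introduces only terms strictly below $\lambda$. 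Since the support is finite, iterating this step reduces $f$ to $0$ in finitely many reductions, assembling the desired preimage in $A[X_1,\ldots,X_n]$. The one nontrivial input is the dominance claim about maximal elements of $W$-invariant weight subsets of $\Lambda$; the rest is bookkeeping with the partial order.
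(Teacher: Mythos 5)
The paper does not prove this theorem at all---it is quoted from Bourbaki \cite[Ch.~6, \S3, Th.~1]{Bourbaki}---so there is no in-paper proof to compare against. Your argument is the standard triangularity proof, and the injectivity half is complete and correct: iterating Lemma~\ref{maxterm} gives that $x^{\mathbf k}$ has unique maximal term $e^{\sum k_i\omega_i}$ with coefficient $1$, a unique maximal element of a finite poset is its greatest element, and the coefficient extraction at a maximal $\lambda^*$ goes through. Likewise, your identification of maximal support weights of a $W$-invariant element with dominant weights is exactly the content of the lemma preceding the theorem in the paper.

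There is, however, a genuine gap in the termination of your surjectivity induction. The justification ``since the support is finite, iterating this step reduces $f$ to $0$ in finitely many reductions'' does not work as stated, because the reduction $f\mapsto f-a_\lambda\varphi(X^{k_1}\cdots X^{k_n})$ can \emph{enlarge} the support: it deletes the orbit $W(\lambda)$ but inserts the entire lower part of the support of $x^{\mathbf k}$. So no cardinality of the support decreases, and ``noetherian induction'' cannot be run on the dominance order on all of $\Lambda$ either, since that order is not well-founded ($\lambda>\lambda-\alpha_1>\lambda-2\alpha_1>\cdots$). What saves the argument is that the new terms form a $W$-stable set whose dominant representatives are dominant weights strictly below $\lambda$, so the induction really runs over the set $\{\mu\in\Lambda\cap\overline{\mf{C}} : \mu\leq\lambda \text{ for some maximal } \lambda \text{ of the original support}\}$, and one must invoke the (standard but not free) fact that the set of \emph{dominant} weights below a fixed dominant weight is finite --- equivalently, that the dominance order restricted to $\Lambda\cap\overline{\mf{C}}$ is well-founded. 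Be careful not to replace this with the claim that the dominant conjugate $\nu^+$ of any $\nu\leq\lambda$ again satisfies $\nu^+\leq\lambda$; that is false in general (in type $A_2$, $\nu=\alpha_2-\alpha_1\leq\rho$ has $\nu^+=3\omega_1\not\leq\rho$), and it is only the $W$-invariance of $x^{\mathbf k}$ itself that guarantees the new dominant weights stay below $\lambda$. With the correct well-foundedness input stated, your proof is the standard one and is complete.
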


An exponential analogue of Theorem~\ref{detjac} also exists. In order to express this theorem, we define a linear map on $A[\Lambda]$ by the formula
\[D_j\left(\sum_{\lambda}a_\lambda e^\lambda\right) = \sum_{\lambda} (\lambda,\alpha_j^\vee)a_\lambda e^\lambda.\]
It can be directly verified that $D_j$ is a derivation of $A[\Lambda]$ for each $1\leq j \leq n$. On the other hand, we can consider a formal exponential sum as a complex valued function by putting
$$e^\lambda(\gamma) \mapsto e^{-2\pi i(\lambda, \gamma)}$$
as in \cite[Lemma~4.1]{hoffwith}. In this regard, the operator $D_j$ becomes a partial derivative with respect to a certain coordinate function.

The following theorem was communicated to Bourbaki by R.~Steinberg as it is stated as a footnote for \cite[Ch.~6, \S3, Ex.~1]{Bourbaki}. To our knowledge, it has remained unpublished.

\begin{theorem}\label{steinberg}
	Let $x_1, \ldots, x_n$ be a family of elements of $A[\Lambda]^W$ satisfying the condition of Theorem~\ref{expformche}. Then
$\det\left(\left[D_j(x_i)\right]\right)=J(e^\rho)$.	
\end{theorem}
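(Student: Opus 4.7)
The plan is to establish two things about $D=\det([D_j(x_i)])$: that it is anti-invariant under $W$, and that its unique maximal term is $e^\rho$ with coefficient $1$. Together with the basis theorem for anti-invariants and the observation that $\rho$ is the minimum of $\Lambda\cap\mathfrak{C}$ in the partial order, the identity $D=J(e^\rho)$ falls out immediately.

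For anti-invariance, the key is to compute how $D_j$ interacts with $w\in W$ acting on an invariant element $a=\sum a_\lambda e^\lambda\in A[\Lambda]^W$. Since $a_{w\lambda}=a_\lambda$, a substitution $\mu=w\lambda$ yields
\[w(D_j(a))=\sum_\mu a_\mu (\mu,w(\alpha_j^\vee))\,e^\mu.\]
Expanding $w(\alpha_j^\vee)=\sum_k(T_w)_{kj}\alpha_k^\vee$—which holds because $(\omega_i,w(\alpha_j^\vee))=(T_w)_{ij}$ identifies the coefficients in the coroot basis via $(\omega_i,\alpha_k^\vee)=\delta_{ik}$—this becomes $w(D_j(a))=\sum_k(T_w)_{kj}D_k(a)$. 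Applied entrywise with $a=x_i$, this says $w([D_j(x_i)])=[D_j(x_i)]\cdot T_w$, and taking determinants while invoking $\det(T_w)=\det(w)$ from Lemma~\ref{glz} gives $w(D)=\det(w)\cdot D$.

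For the leading term, I would expand $D=\sum_\sigma \mathrm{sgn}(\sigma)\prod_i D_{\sigma(i)}(x_i)$. The hypothesis that $x_i$ has unique maximal term $e^{\omega_i}$ (together with the finiteness of the support of an element of the group algebra) forces every $\lambda$ in the support of $x_i$ to satisfy $\lambda\leq\omega_i$, because any $\lambda$ in a finite poset lies beneath some maximal element. Consequently, every exponent appearing in $\prod_i D_{\sigma(i)}(x_i)$ is at most $\sum_i\omega_i=\rho$, with equality only if each factor contributes from its top term $e^{\omega_i}$. The coefficient of that contribution is $\prod_i(\omega_i,\alpha_{\sigma(i)}^\vee)=\prod_i\delta_{i,\sigma(i)}$, which is nonzero only when $\sigma$ is the identity. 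Hence the coefficient of $e^\rho$ in $D$ equals $1$ and every other exponent is strictly less than $\rho$.

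To finish, write $D=\sum_{\lambda\in\Lambda\cap\mathfrak{C}}c_\lambda J(e^\lambda)$. Each such $\lambda=\sum a_i\omega_i$ with $a_i\geq 1$ satisfies $\lambda-\rho=\sum(a_i-1)\omega_i\geq 0$ because the fundamental weights are nonnegative combinations of the simple roots; thus $\rho$ is the minimum of $\Lambda\cap\mathfrak{C}$. Since each $J(e^\lambda)$ has $e^\lambda$ as its unique maximal term, if $c_\lambda\neq 0$ the exponent $\lambda$ must appear in the support of $D$, forcing $\lambda\leq\rho$ and hence $\lambda=\rho$. Comparing the coefficients of $e^\rho$ yields $c_\rho=1$, so $D=J(e^\rho)$. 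The main delicate step I anticipate is the $W$-equivariance calculation for $D_j$ and the bookkeeping identifying the change-of-basis matrix with $T_w$; the rest is a straightforward leading-term analysis.
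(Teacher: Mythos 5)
Your proposal is correct and follows essentially the same strategy as the paper: establish anti-invariance via the identity $w([D_j(x_i)])=[D_j(x_i)]\,T_w$ together with $\det(T_w)=\det(w)$, show the unique maximal term is $e^\rho$ with coefficient $1$ using $D_j(e^{\omega_i})=\delta_{ij}e^{\omega_i}$ in the permutation expansion, and conclude from the $J(e^\lambda)$ basis of anti-invariants. Your closing observation that $\rho$ is minimal in $\Lambda\cap\mathfrak{C}$ just makes explicit a step the paper leaves implicit.
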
	
\begin{proof}  Let $x_i=\sum_{\lambda\in \Lambda} a^i_\lambda e^\lambda$. Then 
 $$w(D_j(x_i)) = \sum_{\lambda\in \Lambda} a^i_\lambda (\lambda, \alpha_j^\vee) e^{w(\lambda)}.$$ 
 Each $w\in W$ is an isometry. Replacing $(\lambda, \alpha_j^\vee)$ with $(w(\lambda), w(\alpha_j^\vee))$ and applying Lemma~\ref{inner}, we obtain 
\[w([D_j(x_i)]) = \left[\sum_{m=1}^{n} \left(\sum_{\lambda\in \Lambda} a^i_\lambda (w(\lambda),\alpha_m^\vee)e^{w(\lambda)} \right)(\omega_m,w(\alpha_j^\vee))\right].\]
We want to express the right hand side of the above equation as a product of two matrices. If 
\[ A = \left[\left(\sum_{\lambda\in \Lambda} a^i_\lambda (w(\lambda),\alpha_j^\vee)e^{w(\lambda)} \right)\right] \quad \text{and} \quad B=\left[(\omega_i,w(\alpha_j^\vee))\right] \]
then the entries of the product $AB=C$ are given by
$C_{ij} = \sum_{m=1}^n A_{im}B_{mj}$. We recall that $B=T_w$ as in Lemma~\ref{glz}. Thus, we have
$$w([D_j(x_i)]) = A B=[D_j(w(x_i))] T_w.$$
We have $w(x_i)=x_i$ by the hypothesis. The determinant function is multiplicative, and $\det(w)=\det(T_w)$ by Lemma~\ref{glz}. It follows that $\det(D_j(x_i))$ is anti-invariant.

Secondly, we show that $\det(D_j(x_i))$ has unique maximal term $e^\rho$. We first note that $D_j(x_i)$ has maximal terms less than or equal to $e^{\omega_i}$. Moreover, the derivation $D_j$ satisfies the following property
\[D_j(e^{\omega_i})=\delta_{ij}e^{w_i}.\]
It follows that $D_j(x_i)$ has unique maximal term $e^{\omega_i}$ if and only if $i=j$. Thus the diagonal summand $\prod D_j(x_j)$ of  $\det(D_j(x_i))$ has unique maximal term $e^\rho= e^{\omega_1+\ldots+\omega_n}$ by Lemma~\ref{maxterm}.

Each summand of the determinant is a product of $n$ terms and each derivation $D_j$ occurs once and only once. Moreover, each summand must contain a diagonal term $D_j(e^{\omega_j})$. Applying Lemma~\ref{maxterm}, we see that $\det(D_j(x_i))$ has maximal terms less than or equal to $e^\rho= e^{\omega_1+\ldots+\omega_n}$. This finishes the proof of the fact that $\det(D_j(x_i))$ has unique maximal term $e^\rho$.

The elements $J(e^\lambda)$, with $\lambda \in \Lambda \cap \mf{C}$, form a basis of the submodule of anti-invariant elements. Thus we must have $\det\left(\left[D_j(x_i) \right]\right) = J(e^\rho)$.
\end{proof}

A well known identity for $J(e^\rho)$, which is true in $A[\frac{1}{2} \Lambda]$, is the following
\[ J(e^\rho)=\prod_{\alpha \in \Phi^+} (e^{\alpha/2}-e^{-\alpha/2}). \]
Note that $e^\alpha(\gamma) = e^{-2\pi i(\alpha, \gamma)}=1$ if and only if $(\alpha,\gamma)=0$. It follows that the zero locus of $e^{\alpha/2} - e^{-\alpha/2}$, regarded as a complex valued function, is the hyperplane $H_\alpha$. This is also the case for Theorem~\ref{detjac}. The analogy between the polynomial and the exponential invariants is proved to be beneficial. For example, the exponents of a reflection group based on the height of roots in the crystallographic root systems can be computed in this fashion \cite[Chap.~10]{carter}.

Generating sets $\{S(e^{\omega_i}) \mathrel{|} 1\leq i \leq n\}$ and  $\{J(e^{\rho+\omega_i})/J(e^\rho) \mathrel{|} 1\leq i \leq n\}$ of $A[\Lambda]^W$ both have computational advantages. The $S$-type elements have simpler expressions and they are used to define the generalized Chebyshev polynomials. On the other hand the $J$-type quotients are the characters of irreducible representations. We have the following celebrated theorem:

\begin{theorem}[Weyl character formula]\label{Weyl-Char-Form} Let $\chi_\lambda$ be the character of an irreducible representation of $\mf{g}$ with highest weight $\lambda$. Then $\chi_\lambda = J(e^{\rho+\lambda})/J(e^\rho)$. 
\end{theorem}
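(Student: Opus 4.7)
The plan is to establish the equivalent identity $\chi_\lambda \cdot J(e^\rho) = J(e^{\rho+\lambda})$ in $A[\Lambda]$; since $J(e^\rho)$ is a nonzero divisor (it factors as $\prod_{\alpha\in\Phi^+}(e^{\alpha/2}-e^{-\alpha/2})$ in $A[\tfrac12\Lambda]$), the stated theorem follows by division. The overall strategy is to place the left-hand side in the submodule of anti-invariants and then pin it down by its expansion in the natural basis $\{J(e^{\rho+\mu})\}_{\mu}$.

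First I would verify anti-invariance. Characters of finite-dimensional $\mf{g}$-modules are $W$-invariant because the weight spaces $V_\mu$ and $V_{w\mu}$ have equal dimensions (an $\mathfrak{sl}_2$-triple argument on the root subalgebras). Hence $\chi_\lambda \in A[\Lambda]^W$, and multiplying an invariant element by the anti-invariant $J(e^\rho)$ yields an anti-invariant element. By the basis lemma for anti-invariants, we may therefore write
\[ \chi_\lambda \cdot J(e^\rho) \;=\; \sum_{\mu} c_\mu\, J(e^{\rho+\mu}), \]
where the sum ranges over dominant integral weights $\mu$ and the coefficients $c_\mu$ are integers.

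Next I would pin down $c_\lambda$ by a maximal-term comparison. The weight $\lambda$ occurs in $V_\lambda$ with multiplicity one and strictly dominates every other weight of $V_\lambda$, so $\chi_\lambda$ has $e^\lambda$ as its unique maximal term with coefficient $1$. Applying Lemma~\ref{maxterm} with $a=\chi_\lambda$ and $b=J(e^\rho)$ shows that $\chi_\lambda J(e^\rho)$ has unique maximal term $e^{\rho+\lambda}$, which forces $c_\lambda=1$ and $c_\mu=0$ whenever $\mu\not\leq\lambda$.

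The main obstacle is ruling out $c_\mu\neq 0$ for the remaining dominant weights $\mu<\lambda$. The cleanest route is via the central character: the Casimir $C\in Z(U(\mf{g}))$ acts on $V_\lambda$ by the scalar $(\lambda+\rho,\lambda+\rho)-(\rho,\rho)$, and this action can be transported to a second-order operator $\Omega$ on $A[\Lambda]$, built from the $D_j$ together with a dual basis of the Cartan subalgebra with respect to the Killing form, that acts on $J(e^{\rho+\mu})$ as multiplication by $(\mu+\rho,\mu+\rho)-(\rho,\rho)$. Comparing $\Omega$-eigenvalues in the expansion above forces $(\mu+\rho,\mu+\rho)=(\lambda+\rho,\lambda+\rho)$ whenever $c_\mu\neq 0$; a short inner-product computation using $\lambda-\mu=\sum n_i\alpha_i$ with $n_i\geq 0$ together with the strict dominance of $\lambda+\mu+2\rho$ then yields $\mu=\lambda$. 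An alternative route avoids the Casimir altogether by invoking the BGG resolution, which realizes $V_\lambda$ as an alternating complex of Verma modules with known characters $\mathrm{ch}\,M(\nu)=e^\nu/\prod_{\alpha>0}(1-e^{-\alpha})$; summing and clearing the common denominator produces $J(e^{\rho+\lambda})/J(e^\rho)$ at once.
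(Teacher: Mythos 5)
The paper offers no proof of Theorem~\ref{Weyl-Char-Form}: the Weyl character formula is quoted as a classical result and only its statement is used later (in Theorems~\ref{twokinds} and~\ref{main}), so there is no internal argument to compare yours against. Your outline is the standard textbook proof (essentially Humphreys' or Bourbaki's treatment) and its skeleton is sound: $W$-invariance of $\chi_\lambda$ makes $\chi_\lambda J(e^\rho)$ anti-invariant; the basis lemma for anti-invariants gives the expansion $\sum_\mu c_\mu J(e^{\rho+\mu})$ over dominant $\mu$; Lemma~\ref{maxterm} applied to the unique maximal terms $e^\lambda$ of $\chi_\lambda$ and $e^\rho$ of $J(e^\rho)$ pins down $c_\lambda=1$ and excludes $\mu\not\leq\lambda$; and the eigenvalue comparison plus the positivity of $(\lambda+\mu+2\rho,\lambda-\mu)$ for $\lambda-\mu=\sum n_i\alpha_i$ with $n_i\geq 0$ eliminates the remaining $\mu<\lambda$. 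The one place where your write-up is genuinely only a sketch is the transport of the Casimir to a second-order operator $\Omega$ on $A[\Lambda]$: that $\Omega$ acts on each $J(e^{\rho+\mu})$ by the scalar $(\mu+\rho,\mu+\rho)$ is easy (every exponent $w(\rho+\mu)$ has the same squared length), but the verification that $\Omega$ acts on the product $\chi_\lambda J(e^\rho)$ by $(\lambda+\rho,\lambda+\rho)$ is the technical heart of Weyl's argument; it requires the second-order product rule together with the Casimir eigenvalue computation on $V_\lambda$ (Freudenthal-style), and would need to be written out in full for a complete proof. The BGG alternative you mention is also legitimate, though it outsources the difficulty to the construction of the resolution, or to the unitriangularity of the Verma-to-simple character matrix in category $\mathcal{O}$. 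Either way, nothing in your plan conflicts with the paper, which simply takes the formula as given.
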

We finish this section by giving an example that illustrates the connection between the polynomial and the exponential invariants. 
\begin{example} Consider the $G_2$ case. We use the coordinate functions $a$ and $b$ attached to the coroots in the symmetric algebra $S(E^*)$. More precisely, we define $a$ and $b$ by $\gamma = a\alpha_1^\vee + b\alpha_2^\vee$. The Weyl group is generated by the transformations 
$$\sigma_{\alpha_1}:(a,b)\mapsto (-a+b,b) \quad \text{and} \quad \sigma_{\alpha_2}:(a,b)\mapsto (a,3a-b).$$
The following polynomials are invariant under the action of $W=\langle \sigma_{\alpha_1}, \sigma_{\alpha_2} \rangle$:
\begin{align*}
	f_1 &= 3a^2-3ab+b^2,\\
	f_2 &= 4a^6 - 12ba^5 + 13b^2a^4 - 6b^3a^3 + b^4a^2.
\end{align*}
Moreover, the polynomials $f_1$ and $f_2$ generate $\R[a,b]^W$ as a polynomial algebra. The degrees of $f_1$ and $f_2$ form the set $\{d_1,d_2\}=\{2,6\}$ as it is stated in Table~\ref{deg-pol-inv}.

Now let us consider the exponential analogue. The functions $S(e^{\omega_1})(\gamma)$ and $S(e^{\omega_2})(\gamma)$ obtained by $e^\lambda(\gamma) \mapsto e^{(\lambda, \gamma)}$ are also invariant under the action of the Weyl group (the term $-2\pi i$ is omitted for simplicity). Their series expansion come with homogeneous polynomials that can be written in terms of $f_1$ and $f_2$. We give the first few terms of these infinite series: 
\begin{align*}
	S(e^{\omega_1})(\gamma) & = (e^{\omega_1}+ e^{-\omega_1}+ e^{\omega_1- \omega_2}+ e^{-\omega_1+ \omega_2}+ e^{2\omega_1- \omega_2}+ e^{-2\omega_1+ \omega_2})(\gamma)\\ 
	&= 6+2f_1+\frac{f_1^2}{6}+ \frac{f_1^3}{180}+ \frac{f_2}{120}+\ldots, \\
	S(e^{\omega_2})(\gamma) &= (e^{\omega_2} +e^{-\omega_2}+ e^{3\omega_1- \omega_2} + e^{-3\omega_2+\omega_2}+ e^{3\omega_1- 2\omega_2}+ e^{-3\omega_1+2\omega_2})(\gamma)\\ &=6+6f_1+\frac{3f_1^2}{2}+\frac{11f_1^3}{60}-\frac{9f_2}{40} + \ldots.
\end{align*}
Similar series expansions can be written for $\chi_{\omega_1} = S(e^{\omega_1}) +1 $ and $\chi_{\omega_2} = S(e^{\omega_1})+ S(e^{\omega_2}) + 2$.
\end{example}

\section{Generalized Chebyshev Polynomials}
The main reference for this section is \cite{hoffwith}. The formal exponential sums are considered as complex valued functions by putting
$$e^\lambda(\gamma) \mapsto e^{-2\pi i(\lambda, \gamma)}$$
as in \cite[Lemma~4.1]{hoffwith}. Moreover, the generalized cosine function is defined as 
\[y_i(\gamma)=S(e^{\omega_i})(\gamma) = \sum_{\mu \in W(\omega_i)}e^{-2\pi i(\mu, \gamma)}\]
Set $\mathbf{y}=(y_1,\ldots,y_n)$. The exponential form of Chevalley's theorem has the following consequence.
\begin{theorem}[\cite{veselov},\cite{hoffwith}]\label{veselov}
	With each semisimple complex Lie algebra $\mf{g}$ of rank $n$, there is an associated infinite sequence of polynomial mappings $P_\mf{g}^k, ~k \in {\N}$ determined from the conditions
	\[ \mathbf{y}(k\gamma)=P_\mf{g}^k(\mathbf{y}(\gamma)). \]
	All coefficients of the polynomials defining $P_\mf{g}^k$ are integers.
\end{theorem}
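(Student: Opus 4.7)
The plan is to leverage Theorem~\ref{expformche} with coefficient ring $A = \Z$. The components of the desired mapping $P_\mf{g}^k$ will be extracted as the unique polynomial expressions, in terms of the generators $S(e^{\omega_i})$, of certain natural $W$-invariant exponential sums.

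First I would observe that, because $(\mu, k\gamma) = (k\mu, \gamma)$, the function $y_i(k\gamma)$ corresponds under the evaluation $e^\lambda(\gamma) \mapsto e^{-2\pi i(\lambda, \gamma)}$ to the formal exponential sum
\[ z_i^{(k)} \;=\; \sum_{\mu \in W(\omega_i)} e^{k\mu} \;\in\; \Z[\Lambda]. \]
The element $z_i^{(k)}$ is manifestly $W$-invariant: any $w \in W$ permutes the orbit $W(\omega_i)$ and hence permutes the terms $e^{k\mu}$. Moreover the coefficients of $z_i^{(k)}$ are all $0$ or $1$, so $z_i^{(k)} \in \Z[\Lambda]^W$.

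Next, I would apply Theorem~\ref{expformche} with $A = \Z$ to the family $x_i = S(e^{\omega_i})$, each of which has $e^{\omega_i}$ as its unique maximal term. The theorem provides a $\Z$-algebra isomorphism
\[ \varphi : \Z[X_1,\ldots,X_n] \xrightarrow{\sim} \Z[\Lambda]^W, \qquad X_i \mapsto S(e^{\omega_i}). \]
Consequently, for each $i$ there exists a unique polynomial $P_i^k \in \Z[X_1,\ldots,X_n]$ satisfying the identity $z_i^{(k)} = P_i^k\bigl(S(e^{\omega_1}),\ldots, S(e^{\omega_n})\bigr)$ in $\Z[\Lambda]^W$.

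Finally, I would set $P_\mf{g}^k = (P_1^k,\ldots, P_n^k)$. Evaluating the preceding identity at $\gamma$ recovers $y_i(k\gamma) = P_i^k(\mathbf{y}(\gamma))$ for each $i$, which is exactly $\mathbf{y}(k\gamma) = P_\mf{g}^k(\mathbf{y}(\gamma))$. Integrality of all coefficients is automatic because $\varphi$ is an isomorphism of $\Z$-algebras. The only delicate point in the argument is insisting that Theorem~\ref{expformche} be invoked over $\Z$ rather than over $\R$ or $\C$; once that is secured, uniqueness from the polynomial algebra isomorphism guarantees that $P_\mf{g}^k$ is well-defined with integer coefficients, and no further obstacle arises.
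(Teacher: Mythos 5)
Your proposal is correct and follows exactly the paper's route: the paper's entire proof is ``Apply Theorem~\ref{expformche}, with $A=\mathbf{Z}$, and $x_i = S(e^{\omega_i})$,'' and you have simply filled in the details (identifying $y_i(k\gamma)$ with the $W$-invariant element $S(e^{k\omega_i})\in\Z[\Lambda]^W$ and using the isomorphism $\varphi$ over $\Z$ to get unique integer-coefficient polynomials). Nothing to add.
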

\begin{proof}
	Apply Theorem~\ref{expformche}, with $A=\mathbf{Z}$,  and $x_i  =S(e^{\omega_i})$.
\end{proof}

Recall that the Chebyshev polynomials $T_k$ act on cosine values in a manner that is very similar to the statement of the above theorem. For compatibility with the Lie algebra constructions, we consider a normalized version of Chebyshev polynomials that satisfy the following functional equation 
$$T_k(2\cos(\theta)) = 2\cos(k\theta).$$
We call the polynomials $P_{\mf{g}}^k$ of Theorem~\ref{veselov} as generalized Chebyshev polynomials because they coincide with $T_k$ if $\mf{g}$ is the unique simple Lie algebra of rank $n=1$. More precisely, we have
\[P_{A_1}^k = T_k,\quad  k\in\N.\]
We note that $W(\omega_1)=\{\omega_1,-\omega_1\}$ and $y_1(\gamma)=2\cos(2\pi u_1)$ where $\gamma = u_1\alpha_1^\vee$. The Chebyshev polynomials can be computed by the recurrence relation 
\[T_k(x) = xT_{k-1}(x)-T_{k-2}(x)\]
for $k \geq 2$. There are similar recurrence relations for the generalized Chebyshev polynomials \cite{withers}. There is an explicit formula for the coefficients of $T_k$:
\[ T_k(x) = \sum_{j=0}^{\lfloor k/2\rfloor} \frac{k}{k-j}\binom{k-j}{j}
(-1)^j x^{k-2j}.\]
This formula can be proved by using Waring's formula \cite[Equation~(7.5)]{lidlnied}. Moreover, this idea can be generalized to write the coefficients of $P_{\mf{g}}^k$. The computations for the rank two cases, namely $A_2$, $B_2$, and $G_2$, are done in \cite{aydogdu}.

\begin{example}\label{exampleche} 
Consider the $G_2$ case. We set $y_1= S(e^{\omega_1})$ and $y_2 = S(e^{\omega_2})$. The exponential invariants $S(e^{2\omega_1})$ and $S(e^{2\omega_2})$ can be written as polynomials in $y_1$ and $y_2$ by Theorem~\ref{veselov}. A lengthy but straightforward computation gives that
	\begin{align*}
		S(e^{2\omega_1}) &= g_1(y_1,y_2) =y_1^2-2y_2-2y_1-6,\\
		S(e^{2\omega_2}) &= g_2(y_1,y_2) = y_2^2-2y_1^3+6y_1y_2+10y_2+18y_1+18.
	\end{align*}
The Jacobian matrix of $P_{G_2}^2=(g_1,g_2)$ is given by
\[ \frac{\partial (g_1,g_2)}{\partial (y_1,y_2)} = \left[\begin{array}{cc}2y_1-2&-2\\-6y_1^2+6y_2+18& 6y_1+2y_2+10\end{array}\right] \]	
Our main result, namely Theorem~\ref{main}, gives a practical method to write this matrix in the following form
\[ J(P_{G_2}^2) = \left[\begin{array}{cc} 2\chi_{\omega_1} -4\chi_0 &-2\chi_0\\-6\chi_{2\omega_1}& 4\chi_{\omega_1}+2\chi_{\omega_2}+2\chi_0\end{array}\right] \]
without finding the polynomials $g_i$. Recall that $\chi_\lambda$ is the character of an irreducible representation with highest weight $\lambda$. It can be computed with the help of the Weyl character formula. For instance 
\[\chi_0=1,\quad \chi_{\omega_1}=y_1+1,\quad \chi_{\omega_2}=y_1+y_2+2,\quad \chi_{2\omega_1}=y_1^2-y_2-3.\]
The determinant of the above matrix turns out to be 
\[4\chi_{\omega_1+\omega_2} = 4(y_1y_2+2y_1+2y_2+4).\] 
This is a special case of Theorem~\ref{twokinds}.
\end{example}

\section{Main Results}
Let $P_\mf{g}^k=(g_1,\ldots, g_n)$ be the generalized Chebyshev polynomials defined in the previous section. Our purpose is to understand the Jacobian matrix
\[J(P_\mf{g}^k)=\frac{\partial (g_1,\ldots,g_n)}{\partial (y_1,\ldots,y_n)}= \begin{bmatrix}
	\dfrac{\partial g_1}{\partial y_1} & \cdots & \dfrac{\partial g_1}{\partial y_n}\\
	\vdots                             & \ddots & \vdots\\
	\dfrac{\partial g_n}{\partial y_1} & \cdots & \dfrac{\partial g_n}{\partial y_n}
\end{bmatrix} \]
Recall that the derivation $D_j$ can be regarded as a multiple of $\partial u_j$ by using the coordinates $u_j = (\gamma,\alpha_j^\vee)$ and putting $e^\lambda(\gamma) \mapsto e^{-2\pi i(\lambda, \gamma)}$. Applying the chain rule, we get
\[ \frac{\partial(y_1(k\gamma),\ldots,y_n(k\gamma))}{\partial(u_1,\ldots,u_n)} = \frac{\partial(y_1(k\gamma),\ldots,y_n(k\gamma))}{\partial(y_1(\gamma),\ldots,y_n(\gamma))}  \frac{y_1(\gamma),\ldots,y_n(\gamma))}{\partial(u_1,\ldots,u_n)}. \] 
The following matrix appears frequently in our computations.
\begin{definition} We define $\Jac(k)=D_j(S(e^{k\omega_i}))$ for each $k\in \N$. 
\end{definition}

According to the chain rule above, we have $\Jac(k) = J(P_\mf{g}^k) \Jac(1)$. It follows that the Jacobian matrix for $P_\mf{g}^k=(g_1,\ldots,g_n)$ is given by
\[J(P_\mf{g}^k) =\Jac(k)\Jac(1)^{-1}.\]
The Weyl character formula has the following consequence.

\begin{theorem}\label{twokinds}
$\det(J(P_\mf{g}^k)) = k^n\chi_{(k-1)\rho}$
\end{theorem}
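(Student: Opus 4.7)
The plan is to exploit the factorization $J(P_\mf{g}^k) = \Jac(k)\Jac(1)^{-1}$ stated just before the theorem, which reduces the problem to computing $\det(\Jac(k))$ and $\det(\Jac(1))$ separately. The denominator is immediate from Steinberg's theorem: since $S(e^{\omega_1}),\ldots,S(e^{\omega_n})$ form a family of algebra generators of $A[\Lambda]^W$ satisfying the hypothesis of Theorem~\ref{expformche}, Theorem~\ref{steinberg} gives $\det(\Jac(1)) = J(e^\rho)$.

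For the numerator, my key observation is that $\Jac(k)$ is not really new; it differs from $\Jac(1)$ only by a harmless rescaling. I would introduce the ring homomorphism $\psi_k\colon A[\Lambda]\to A[\Lambda]$ defined on the basis by $\psi_k(e^\lambda) = e^{k\lambda}$. Because $W$ acts linearly on $\Lambda$, one checks that $\psi_k$ commutes with the $W$-action and hence with $J$. The derivation $D_j$ does \emph{not} commute with $\psi_k$; rather, directly from the definition one has $D_j(e^{k\lambda}) = k(\lambda,\alpha_j^\vee)e^{k\lambda} = k\,\psi_k(D_j(e^\lambda))$, and extending $A$-linearly over $\Lambda$ gives the entrywise identity
\[D_j\!\left(S(e^{k\omega_i})\right) = k\,\psi_k\!\left(D_j(S(e^{\omega_i}))\right).\]
Therefore $\Jac(k)$ is obtained from $\Jac(1)$ by multiplying each entry by $k$ and then applying $\psi_k$ entrywise. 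Since $\psi_k$ is a ring homomorphism and determinants are polynomial in the entries, taking determinants yields
\[\det(\Jac(k)) = k^n\,\psi_k(\det(\Jac(1))) = k^n\,\psi_k(J(e^\rho)) = k^n J(e^{k\rho}),\]
where the last equality uses $\psi_k \circ J = J \circ \psi_k$ together with $\psi_k(e^\rho) = e^{k\rho}$.

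Combining the two pieces gives $\det(J(P_\mf{g}^k)) = k^n J(e^{k\rho})/J(e^\rho)$. Writing $k\rho = \rho + (k-1)\rho$ with $(k-1)\rho \in \Lambda \cap \overline{\mathfrak{C}}$ for $k\geq 1$, the Weyl character formula (Theorem~\ref{Weyl-Char-Form}) identifies the quotient with $\chi_{(k-1)\rho}$, which is the desired conclusion. I do not expect a serious obstacle: the main thing to get right is the bookkeeping that extracts exactly one factor of $k$ from each of the $n$ derivations (producing $k^n$ and not another power), and the verification that applying a ring homomorphism entrywise to a matrix commutes with the determinant; both are essentially mechanical once $\psi_k$ is in place, and Steinberg's theorem does all the conceptual work.
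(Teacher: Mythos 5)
Your proposal is correct and follows essentially the same route as the paper: factor $J(P_\mf{g}^k)=\Jac(k)\Jac(1)^{-1}$, get $\det(\Jac(1))=J(e^\rho)$ from Steinberg's theorem, show $\det(\Jac(k))=k^nJ(e^{k\rho})$, and finish with the Weyl character formula. Your endomorphism $\psi_k\colon e^\lambda\mapsto e^{k\lambda}$ is simply an algebraic rendering of the step the paper dispatches with the words ``applying the chain rule'' (i.e.\ the substitution $\gamma\mapsto k\gamma$ under the realization $e^\lambda(\gamma)=e^{-2\pi i(\lambda,\gamma)}$), and it makes that step cleaner and more self-contained, but it is not a different argument.
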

\begin{proof} Recall that $\det(\Jac(1))=J(e^\rho)$ by Theorem~\ref{steinberg}. Applying the chain rule, we see that $\det(\Jac(k)) = k^nJ(e^{k\rho})$. Finally, we have $J(e^{k\rho})/J(e^\rho) = \chi_{(k-1)\rho}$ by Theorem~\ref{Weyl-Char-Form}. 
\end{proof}

\begin{example} The Chebyshev polynomials of the first kind, denoted $T_k$ and of the second kind, denoted by $U_k$, can be defined by the following equations:
\[ T_k(2\cos(\theta)) = 2\cos(k\theta), \quad \text{and} \quad U_{k-1}(2\cos(\theta))=2\sin(k\theta)/2\sin(\theta).  \] 
Recall that we have the equality $P_{A_1}^k=T_k$. The Chebyshev polynomials of different kinds are related to each other by the identity
\[\frac{d}{dx}(T_k(x))=k  U_{k-1}(x)\]
Thus Theorem~\ref{twokinds} can be thought as a generalization of this identity, to the higher ranks. 
\end{example}

The adjugate matrix is the transpose of the cofactor matrix by definition. The product of a matrix with its adjugate gives a diagonal matrix whose diagonal entries are the determinant of the original matrix. For instance, we have
\[\Jac(1)\Adj(\Jac(1))=J(e^\rho)I\]
where $I$ is the $n\times n$ identity matrix. The computation of the cofactor matrix, and therefore the adjugate matrix, is difficult in general. In our context, the adjugate matrix $\Adj(\Jac(1))$ is rather easy to find. For this purpose, we start with writing $\Jac(1)$ explicity. 

\begin{theorem}\label{jac(1)}
	The Jacobian matrix $\Jac(1)=D_j(S(e^{\omega_i}))$ is given by
	\[\Jac(1)=\left[ \frac{1}{s_i}\sum_{w\in W} (w(\omega_i),\alpha_j^\vee)e^{w(\omega_i)} \right]\]
	where $s_i$ is the size of the stabilizer group $\Stab(\omega_i)$. 
\end{theorem}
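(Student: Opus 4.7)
The plan is a short, direct unpacking of the definitions, with the orbit–stabilizer theorem as the only non-trivial ingredient. First, I would apply the derivation $D_j$ to the defining expression $S(e^{\omega_i}) = \sum_{\mu \in W(\omega_i)} e^\mu$ term by term, using the rule $D_j(e^\mu) = (\mu, \alpha_j^\vee)\, e^\mu$, to obtain
\[D_j(S(e^{\omega_i})) = \sum_{\mu \in W(\omega_i)}(\mu, \alpha_j^\vee)\, e^\mu.\]
This already identifies the $(i,j)$-entry of $\Jac(1)$ as a sum over the orbit $W(\omega_i)$; what remains is to re-express it as a sum indexed by the full Weyl group $W$.

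For that step, I would invoke the orbit–stabilizer theorem: the map $W \to W(\omega_i)$, $w \mapsto w(\omega_i)$, is surjective with every fibre of cardinality $s_i = |\Stab(\omega_i)|$. Since the summand $(w(\omega_i), \alpha_j^\vee)\, e^{w(\omega_i)}$ depends only on the image $w(\omega_i)$, summation over $W$ counts each orbit element exactly $s_i$ times, giving
\[\sum_{w \in W}(w(\omega_i), \alpha_j^\vee)\, e^{w(\omega_i)} \;=\; s_i \sum_{\mu \in W(\omega_i)}(\mu, \alpha_j^\vee)\, e^\mu \;=\; s_i \cdot D_j(S(e^{\omega_i})).\]
Dividing by $s_i$ yields the claimed formula.

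There is no serious obstacle: $D_j$ is linear and acts diagonally on the basis $\{e^\mu\}$, and the orbit–stabilizer bookkeeping is routine. The only subtlety worth flagging is notational: the theorem uses $s_i$ for the stabilizer size $|\Stab(\omega_i)|$, whereas earlier in the paper $s_i$ was introduced as the size of the orbit $W(\omega_i)$. One should be careful to apply the theorem's convention, which makes the coefficient $1/s_i$ come out correctly; under the orbit-size convention the coefficient would instead be $|\Stab(\omega_i)|/|W|$.
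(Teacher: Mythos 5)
Your proof is correct and follows essentially the same route as the paper, which simply rewrites $S(e^{\omega_i}) = \sum_{\mu \in W(\omega_i)}e^\mu = \frac{1}{s_i}\sum_{w\in W} e^{w(\omega_i)}$ and applies $D_j$; your version just makes the orbit--stabilizer bookkeeping explicit. Your closing remark is also apt: the paper does define $s_i$ as the orbit size in Section~2 and then silently switches to the stabilizer size here (and again in Theorem~\ref{adjugate}), and the stabilizer convention is the one that makes the coefficient $1/s_i$ correct.
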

\begin{proof}
The result follows immediately, once the derivation $D_j$ is applied to 
\[S(e^{\omega_i}) = \sum_{\mu \in W(\omega_i)}e^\mu = \frac{1}{s_i}\sum_{w\in W} e^{w(\omega_i)}.\] 
\end{proof}

Our next step is the computation of the adjugate matrix $\Adj(\Jac(1))$.
In the proof, we will need the following lemma.

\begin{lemma}\label{lambdaisrho} 
Let $\lambda =w_1(\omega_i)+w_2(\rho-\omega_j)$ for some $w_1, w_2 \in W$. The element $\lambda$ is in the fundamental Weyl chamber $\mf{C}$ if and only if  $\lambda = \rho$. This is possible if only if $\omega_i = \omega_j$, $w_1\in \Stab(\omega_i)$, and $w_2\in \Stab(\rho-\omega_i)$. 
\end{lemma}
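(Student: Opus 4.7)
The easy direction is immediate: if $\omega_i=\omega_j$, $w_1\in\Stab(\omega_i)$, and $w_2\in\Stab(\rho-\omega_i)$, then $w_1(\omega_i)=\omega_i$ and $w_2(\rho-\omega_j)=\rho-\omega_j$, so $\lambda=\omega_i+(\rho-\omega_j)=\rho$, which plainly lies in $\mf{C}$. The substance of the lemma is the reverse implication, and the plan is to sandwich $\lambda$ between $\rho$ and $\rho+(\omega_i-\omega_j)$ in the partial order on $\Lambda$, then collapse this sandwich.

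For the upper bound, both $\omega_i$ and $\rho-\omega_j=\sum_{k\neq j}\omega_k$ lie in $\overline{\mf{C}}$, so by the already-recorded fact that $w(\mu)\leq\mu$ whenever $\mu\in\overline{\mf{C}}$ and $w\in W$, one has $w_1(\omega_i)\leq\omega_i$ and $w_2(\rho-\omega_j)\leq\rho-\omega_j$; summing gives $\lambda\leq\rho+(\omega_i-\omega_j)$. For the lower bound, the condition $\lambda\in\mf{C}\cap\Lambda$ forces $\lambda=\sum_k a_k\omega_k$ with each $a_k\geq 1$, and since the inverse Cartan matrix has non-negative entries every $\omega_k$ is a non-negative combination of simple roots, whence $\lambda-\rho=\sum_k(a_k-1)\omega_k\geq 0$.

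Writing $\mu:=\omega_i-w_1(\omega_i)$ and $\nu:=(\rho-\omega_j)-w_2(\rho-\omega_j)$, both are non-negative integer combinations of simple roots (since $\nu$ and $\mu$ are differences between elements of $\overline{\mf{C}}$ and their Weyl-orbit images, both inside the root lattice). The identity $\lambda=\rho+(\omega_i-\omega_j)-(\mu+\nu)$ rearranges to $\omega_i-\omega_j=(\lambda-\rho)+(\mu+\nu)$. When $\omega_i=\omega_j$ the left-hand side vanishes, which forces $\lambda-\rho=0$ and $\mu=\nu=0$; these equalities translate exactly to $\lambda=\rho$, $w_1\in\Stab(\omega_i)$, and $w_2\in\Stab(\rho-\omega_i)$, as claimed.

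The main obstacle is ruling out the case $\omega_i\neq\omega_j$: one must show that no decomposition $\omega_i-\omega_j=(\lambda-\rho)+(\mu+\nu)$ exists with $\lambda-\rho$ a non-negative integer combination of the $\omega_k$ and $\mu+\nu$ a non-negative integer combination of the $\alpha_k$. I would tackle this by pairing both sides against each fundamental weight $\omega_m$ via the identity $(\omega_m,\alpha_l)=\delta_{ml}(\alpha_l,\alpha_l)/2$ and exploiting the strict positivity of the entries of the inverse Cartan matrix, together with the integrality constraints coming from $\Lambda$ modulo the root lattice; I expect the resulting system of numerical inequalities to admit only the trivial solution and thereby close the argument.
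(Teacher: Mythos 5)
Your handling of the case $\omega_i=\omega_j$ is complete and matches the paper: the sandwich $\rho\leq\lambda\leq\rho+\omega_i-\omega_j$ (obtained exactly as in the paper's proof, from $w(\mu)\leq\mu$ for $\mu\in\overline{\mf{C}}$ and from $\lambda-\rho$ being dominant) collapses to $\lambda=\rho$ and $\mu=\nu=0$, which gives the stabilizer conditions. The genuine gap is the case $\omega_i\neq\omega_j$, which you explicitly leave as an expectation, and the strategy you sketch for it cannot work. The relation $\omega_j\leq\omega_i$ is \emph{not} absurd for $i\neq j$: in the paper's own $G_2$ example, $\omega_2-\omega_1=\alpha_1+\alpha_2$ is a nonnegative integer combination of simple roots. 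Hence for $(i,j)=(2,1)$ the equation $\omega_i-\omega_j=(\lambda-\rho)+(\mu+\nu)$ admits the numerically admissible solution $\lambda-\rho=0$, $\mu+\nu=\alpha_1+\alpha_2$, which satisfies every positivity and integrality constraint you propose to impose (and for $G_2$ the weight lattice equals the root lattice, so the ``$\Lambda$ modulo the root lattice'' constraint is vacuous). What actually rules this case out is information about the orbits themselves, not the partial order: one must show that $\rho=5\alpha_1+3\alpha_2$ is not of the form $w_1(\omega_2)+w_2(\rho-\omega_1)=w_1(\omega_2)+w_2(\omega_2)$, i.e.\ not a sum of two long roots, which holds because every long root of $G_2$ has $\alpha_1$-coefficient divisible by $3$. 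So the missing step requires an argument about which elements of $\Lambda$ actually arise as $w_1(\omega_i)+w_2(\rho-\omega_j)$, not merely about which elements are sandwiched between $\rho$ and $\rho+\omega_i-\omega_j$.

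For what it is worth, the paper's own proof derives the same two inequalities and then simply asserts ``we conclude that $\omega_i=\omega_j$ and $\lambda=\rho$,'' so you have located the difficulty exactly where the published argument is tersest; but as written your proposal does not establish the statement, and the route you indicate for closing it is blocked by the example above.
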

\begin{proof}
	Recall that $\rho=\omega_1+\ldots+\omega_n$. The element $\lambda=w_1(\omega_i)+w_2(\rho-\omega_j)$ is integral by definition. If $\lambda\in \mf{C}$, then we have $\rho \leq \lambda$. On the other hand, we have $w(\mu) \leq \mu$ for $\mu \in \overline{\mf{C}}$, $w\in W$. Thus
	\[\rho \leq \lambda =w_1(\omega_i)+w_2(\rho-\omega_j) \leq \omega_i+\rho-\omega_j.\] 
	Using the inequalities $\rho \leq \lambda \leq \omega_i+\rho-\omega_j$ and the fact that $\lambda\in \mf{C}$, we conclude that $\omega_i=\omega_j$ and $\lambda =\rho$. 
	
	If $\rho=w_1(\omega_i)+w_2(\rho-\omega_i)$, then $w_1(\omega_i) \leq \omega_i$ implies that $w_2(\rho - \omega_i) \geq \rho - \omega_i$. This is possible only if $w_2$ leaves $\rho-\omega_i$ unchanged. It follows that $w_2\in \Stab(\rho-\omega_i)$. Moreover, we must have $w_1(\omega_i)=\omega_i$. This means that $w_1\in \Stab(\omega_i)$.
\end{proof}

\begin{theorem}\label{adjugate}
	The adjugate matrix of $\Jac(1)$ is given by
	\[\Adj(\Jac(1))=\left[ \frac{1}{2}\sum_{w\in W} \det(w)  (\omega_i,w(\alpha_j^\vee))e^{w(\rho-\omega_j)} \right]\]
\end{theorem}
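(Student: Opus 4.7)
The plan is to verify directly that the proposed matrix $M$ satisfies $\Jac(1)\cdot M = J(e^\rho)\,I$; since $\det(\Jac(1))=J(e^\rho)$ by Theorem~\ref{steinberg}, this characterizes $M$ as $\Adj(\Jac(1))$.

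First, I would compute the $(i,j)$-entry of $\Jac(1)\cdot M$ using the explicit form of $\Jac(1)$ from Theorem~\ref{jac(1)} together with the proposed $M$. The inner summation over $k$ that comes from matrix multiplication contains the expression $\sum_{k=1}^{n}(w_1(\omega_i),\alpha_k^\vee)(\omega_k,w_2(\alpha_j^\vee))$, which collapses to $(w_1(\omega_i),w_2(\alpha_j^\vee))$ by Lemma~\ref{inner}. After this simplification the entry becomes
\[
(\Jac(1)\cdot M)_{ij}=\frac{1}{2s_i}\sum_{w_1,w_2\in W}\det(w_2)\,(w_1(\omega_i),w_2(\alpha_j^\vee))\,e^{w_1(\omega_i)+w_2(\rho-\omega_j)}.
\]
Substituting $(w_1,w_2)\mapsto(ww_1,ww_2)$ and using the isometry of $w$ together with $\det(ww_2)=\det(w)\det(w_2)$ shows that this element is anti-invariant under $W$.

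Since the entry lies in the submodule of anti-invariants, it expands uniquely as a linear combination $\sum_{\lambda} c_\lambda J(e^\lambda)$ with $\lambda$ ranging over $\Lambda\cap\mf{C}$, and each $c_\lambda$ agrees with the coefficient of $e^\lambda$ in the displayed sum above. By Lemma~\ref{lambdaisrho}, an exponent of the form $w_1(\omega_i)+w_2(\rho-\omega_j)$ lies in $\mf{C}$ only when $i=j$ and $\lambda=\rho$, in which case $w_1\in\Stab(\omega_i)$ and $w_2\in\Stab(\rho-\omega_i)=\{1,\sigma_{\alpha_i}\}$. Consequently the off-diagonal entries of $\Jac(1)\cdot M$ vanish, and for the diagonal the coefficient of $e^\rho$ reduces, using $\sigma_{\alpha_i}(\alpha_i^\vee)=-\alpha_i^\vee$ and $(\omega_i,\alpha_i^\vee)=1$, to
\[
\frac{1}{2s_i}\,|\Stab(\omega_i)|\,\bigl[(\omega_i,\alpha_i^\vee)-(\omega_i,\sigma_{\alpha_i}(\alpha_i^\vee))\bigr]=\frac{s_i}{2s_i}\cdot 2=1,
\]
so $(\Jac(1)\cdot M)_{ii}=J(e^\rho)$ as required.

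The main obstacle is bookkeeping rather than conceptual: the factor $\tfrac{1}{2}$ in the definition of $M$ is exactly what cancels the contribution of the two-element stabilizer $\{1,\sigma_{\alpha_i}\}$, and one must be careful that the convention $s_i=|\Stab(\omega_i)|$ from Theorem~\ref{jac(1)} meshes with the combinatorial factor $|\Stab(\omega_i)|$ produced by the sum over $w_1$. The conceptual crux is Lemma~\ref{lambdaisrho}, which encodes the reason for the adjugate's peculiar form: only the shift $\rho-\omega_j$ paired against $\omega_i$ with $i=j$ can land in the fundamental chamber, and this single coincidence is what distinguishes the diagonal from the off-diagonal entries.
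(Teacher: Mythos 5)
Your proof is correct and follows essentially the same route as the paper's: multiply $\Jac(1)$ by the proposed matrix, collapse the inner sum with Lemma~\ref{inner}, establish anti-invariance of each entry via the substitution $(w_1,w_2)\mapsto(ww_1,ww_2)$, and invoke Lemma~\ref{lambdaisrho} to see that only the diagonal entries pick up $J(e^\rho)$, with the factors $s_i=|\Stab(\omega_i)|$ and $2=|\Stab(\rho-\omega_i)|$ normalizing the coefficient of $e^\rho$ to $1$. Your added remark that $\Jac(1)M=\det(\Jac(1))I$ pins down $M$ as the adjugate is a worthwhile explicit touch that the paper leaves implicit.
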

\begin{proof}
Let $A=\Jac(1)$ and let $B$ be the matrix in the hypothesis. We rewrite the indices of their entries in a suitable way for the matrix multiplication
\begin{align*}
	A_{im} &= \frac{1}{s_i}\sum_{w_1\in W} (w_1(\omega_i),\alpha_m^\vee)e^{w_1(\omega_i)}, \text{and}\\
	B_{mj}&= \frac{1}{2} \sum_{w_2\in W} \det(w_2)  (\omega_m,w_2(\alpha_j^\vee)) e^{w_2(\rho-\omega_j)}.
\end{align*}
The entries of the product $AB=C$ are given by $C_{ij} = \sum_{m=1}^n A_{im}B_{mj}$. By Lemma~\ref{inner}, we have
\[ \sum_{m=1}^{n}(w_1(\omega_i),\alpha_m^\vee)(\omega_m, w_2(\alpha_j^\vee)) = (w_1(\omega_i),w_2(\alpha_j^\vee)).\]
Therefore, the entries of the matrix $C$ are given by 
\[C_{ij}= \frac{1}{2s_i}\sum_{w_1\in W} \sum_{w_2\in W} \det(w_2)(w_1(\omega_i) ,w_2(\alpha_j^\vee)) e^{w_1(\omega_i)+w_2(\rho - \omega_j)}.\]
We claim that each $C_{ij}$ is anti-invariant. To see this, we set $w_1'=ww_1$ and $w_2'=ww_2$ for $w\in W$. We have 
\[w(C_{ij})= \frac{1}{2s_i}\sum_{w_1'\in W} \sum_{w_2'\in W} \det(w_2)(w_1(\omega_i) ,w_2(\alpha_j^\vee)) e^{w_1'(\omega_i)+w_2'(\rho - \omega_j)} \]
We observe two things. Firstly, the element $w\in W$ is an isometry, and we have 
$$(w_1(\omega_i) ,w_2(\alpha_j^\vee)) = (ww_1(\omega_i) ,ww_2(\alpha_j^\vee))= (w_1'(\omega_i) ,w_2'(\alpha_j^\vee))$$
Secondly, $\det(w)$ is either one or minus one. Thus 
$$\det(w_2) = \det(w_2')/\det(w) = \det(w_2')\det(w).$$ 
It follows that $w(C_{ij})=\det(w)C_{ij}$. This finishes the proof of the fact that $C_{ij}$ is anti-invariant.

The elements $J(e^\lambda)$ with $\lambda \in \Lambda \cap \mf{C}$ form a basis for the submodule of anti-invariant elements. The element $\lambda=w_1(\omega_i)+w_2(\rho - \omega_j)$ is in the fundamental Weyl chamber $\mf{C}$ if and only if  $\lambda = \rho$ and $\omega_i=\omega_j$ by Lemma~\ref{lambdaisrho}. It follows immediately that $C_{ij}=0$ if $i\neq j$. 

The significance of the division by $s_i$ and $2$ comes from the fact that $\lambda = \rho$ if only if $\omega_i = \omega_j$, $w_1\in \Stab(\omega_i)$, and $w_2\in \Stab(\rho-\omega_i)$. Recall that $s_i = |\Stab(\omega_i)|$ and $2=|\Stab(\rho-\omega_i)|$. It follows that $C_{ii}$ is an anti-invariant element with unique maximal term $e^\rho$. We must have $C_{ii}=J(e^\rho)$. 
\end{proof}

We want to express our main result for the entries $\partial g_i/\partial y_j$ of the matrix $J(P_\mf{g}^k)$ in a simple form. For this purpose, we make the following definition.
\begin{definition}\label{dij}
For each pair $(w_1,w_2)\in W^2$, we define
$$d_{ij}^k(w_1,w_2) = \left\{\begin{array}{cl}
	\det(w_2)(w_1(k\omega_i) ,w_2(\alpha_j^\vee)) & \textnormal{if } w_1(k\omega_i)+w_2(\rho - \omega_j) \in \Lambda \cap \mf{C},\\
		0& \textnormal{otherwise}.
\end{array}\right.$$
\end{definition}
If $k=1$, the integer $d_{ij}^k(w_1,w_2)$ is rather well understood by Lemma~\ref{lambdaisrho}. We have $d_{ij}^1(w_1,w_2)=1$ if and only if $\omega_i=\omega_j$, $w_1\in\Stab(\omega_i)$, and $w_2 \in \Stab(\rho-\omega_i)$.  Moreover, $d_{ij}^1(w_1,w_2)=0$, otherwise. The situation changes dramatically if $k>1$, but the computation of $d_{ij}^k$ can be done practically with the help of a computer by using the matrices $T_w$ of Lemma~\ref{glz}.

We now state our main result.
\begin{theorem}\label{main}
	The entries of the Jacobian matrix $J(P_\mf{g}^k)$ are given by
	$$\frac{\partial f_i}{\partial y_j} = \sum_{w_1\in W} \sum_{w_2\in W} \frac{d_{ij}^k(w_1,w_2)}{2s_i}\chi_{w_1(k\omega_i)+w_2(\rho-\omega_j)-\rho}.$$
\end{theorem}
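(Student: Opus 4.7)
The plan is to combine the chain-rule identity $J(P_\mf{g}^k) = \Jac(k)\Jac(1)^{-1}$ established just before Definition~\ref{dij} with the identity $\Jac(1)\Adj(\Jac(1))=J(e^\rho)I$ to obtain
\[ J(P_\mf{g}^k) = \frac{1}{J(e^\rho)}\,\Jac(k)\cdot\Adj(\Jac(1)). \]
So it suffices to compute the $(i,j)$ entry of $\Jac(k)\Adj(\Jac(1))$ and recognize its quotient by $J(e^\rho)$ as a combination of characters. The explicit form of $\Adj(\Jac(1))$ is already given by Theorem~\ref{adjugate}, and the companion formula $\Jac(k)_{im}=\tfrac{1}{s_i}\sum_{w_1\in W}(w_1(k\omega_i),\alpha_m^\vee)\,e^{w_1(k\omega_i)}$ is obtained by the argument of Theorem~\ref{jac(1)} applied to $S(e^{k\omega_i})$, using that $\Stab(k\omega_i)=\Stab(\omega_i)$ for $k\geq 1$ so that the normalization $1/s_i$ is unchanged.

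Next I would carry out the matrix multiplication. Multiplying $\Jac(k)_{im}$ by $\Adj(\Jac(1))_{mj}$ and summing over $m$, Lemma~\ref{inner} collapses the inner sum
\[ \sum_{m=1}^n (w_1(k\omega_i),\alpha_m^\vee)(\omega_m,w_2(\alpha_j^\vee)) = (w_1(k\omega_i),w_2(\alpha_j^\vee)), \]
exactly as in the proof of Theorem~\ref{adjugate}. This produces
\[ [\Jac(k)\Adj(\Jac(1))]_{ij} = \frac{1}{2s_i}\sum_{w_1,w_2\in W}\det(w_2)\,(w_1(k\omega_i),w_2(\alpha_j^\vee))\,e^{w_1(k\omega_i)+w_2(\rho-\omega_j)}. \]
The substitution $(w_1,w_2)\mapsto (ww_1,ww_2)$, together with the isometry property of $w\in W$ and the multiplicativity of $\det$, shows as in Theorem~\ref{adjugate} that this entry is anti-invariant under $W$.

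The final step is to expand this anti-invariant element in the basis $\{J(e^\mu):\mu\in\Lambda\cap\mf{C}\}$. Since every $\mu\in\mf{C}$ has trivial stabilizer, the $W$-translates $\{w(\mu)\}_{w\in W}$ are distinct and the coefficient of $J(e^\mu)$ in the basis expansion of any anti-invariant element equals the coefficient of $e^\mu$ itself. Grouping the terms in the double sum by the value of $\lambda=w_1(k\omega_i)+w_2(\rho-\omega_j)$, only pairs $(w_1,w_2)$ with $\lambda\in\Lambda\cap\mf{C}$ survive (any contribution to $e^\lambda$ with $\lambda$ on a wall must cancel by anti-invariance), and the surviving coefficient is precisely $d_{ij}^k(w_1,w_2)/(2s_i)$ by Definition~\ref{dij}. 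Dividing by $J(e^\rho)$ and invoking the Weyl character formula $J(e^\mu)/J(e^\rho)=\chi_{\mu-\rho}$ yields the asserted expression.

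The main obstacle is essentially bookkeeping: convincing ourselves that contributions from pairs $(w_1,w_2)$ for which $w_1(k\omega_i)+w_2(\rho-\omega_j)$ lies on a wall really drop out of the character expansion (they do, because anti-invariance forces their aggregate coefficient to vanish), and verifying that when the sum is restricted to $\lambda\in\Lambda\cap\mf{C}$ the combinatorial factor matches $d_{ij}^k(w_1,w_2)$ without any double-counting. No genuinely new technique is required beyond the one already used to prove Theorem~\ref{adjugate}; the argument is a parametric deformation of that proof from $k=1$ to general $k$.
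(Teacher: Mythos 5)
Your proposal is correct and follows essentially the same route as the paper: the identity $J(P_\mf{g}^k)=\tfrac{1}{J(e^\rho)}\Jac(k)\Adj(\Jac(1))$, the explicit forms of $\Jac(k)$ and $\Adj(\Jac(1))$, the collapse of the matrix product via Lemma~\ref{inner}, anti-invariance of the entries, expansion in the basis $J(e^\lambda)$ with coefficients read off by $d_{ij}^k$, and a final application of the Weyl character formula. Your explicit remarks that wall contributions cancel by anti-invariance and that the coefficient of $J(e^\mu)$ is the coefficient of $e^\mu$ are details the paper leaves implicit, but they do not change the argument.
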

\begin{proof}
Recall that $J(P_\mf{g}^k) =\Jac(k) \Jac(1)^{-1}$. Moreover $\Jac(1)\Adj(\Jac(1)) = J(e^\rho)$. It follows that 
\[J(P_\mf{g}^k) = \frac{1}{J(e^\rho)} \Jac(k) \Adj(\Jac(1)).\]
All entries of the matrix $J(P_\mf{g}^k)$ are invariant under $W$ since they can be expressed in terms of the invariant elements $y_1, \ldots, y_n$. On the other hand, each entry of the matrix $\Jac(k) \Adj(\Jac(1))$ is anti-invariant since it is obtained by multiplying the matrix $J(P_\mf{g}^k)$ with the anti-invariant element $J(e^\rho)$.

We want to understand the entries of the matrix $\Jac(k) \Adj(\Jac(1))$ in terms of the anti-invariant basis elements $J(e^\lambda)$, with $\lambda \in \Lambda \cap \mf{C}$. We have an explicit description for $\Jac(1)$ by Theorem~\ref{jac(1)}. Moreover, the chain rule implies that 
	\[\Jac(k)=\left[ \frac{1}{s_i}\sum_{w\in W} (w(k\omega_i),\alpha_j^\vee)e^{w(k\omega_i)} \right].\]
On the other hand, by Theorem~\ref{adjugate}, we have
	\[\Adj(\Jac(1))=\left[ \frac{1}{2}\sum_{w\in W} \det(w)  (\omega_i,w(\alpha_j^\vee))e^{w(\rho-\omega_j)} \right]\]
The rest of the proof follows closely the pattern of the proof of Theorem~\ref{adjugate}.	 Applying Lemma~\ref{inner} to the product of these two matrices, we see that the $ij$-th entry of the product is given by
\[\frac{1}{2s_i}\sum_{w_1\in W} \sum_{w_2\in W} \det(w_2)(w_1(k\omega_i) ,w_2(\alpha_j^\vee)) e^{w_1(k\omega_i)+w_2(\rho - \omega_j)}.\]
Each entry is anti-invariant and can be expressed as a linear combination of $J$-type basis elements $J(e^\lambda)$, with $\lambda \in \Lambda \cap \mf{C}$. The coefficients are captured by using the function $d_{ij}^k$. We finally divide everything by $J(e^\rho)$, a common divisor of anti-invariant elements. Using the Weyl character formula, i.e. Theorem~\ref{Weyl-Char-Form}, we get the desired expression. 
\end{proof}

The appearance of the determinant in Definition~\ref{dij} is essential. It occurs as a balancing factor between the action of the Weyl group on weights and coroots. If 
$$(\tau_1,\tau_2)\in w_1\Stab(\omega_i) \times w_2\Stab(\rho - \omega_j),$$ then $w_1(k\omega_i)+w_2(\rho - \omega_j) = \tau_1(k\omega_i)+\tau_2(\rho - \omega_j)$. Recall that the $\Stab(\rho - \omega_j)=\{1,\sigma_{\alpha_j}\}$. 
If we pick $\tau_2=w_2 \sigma_{\alpha_j}$, then $ \tau_2(\alpha_j^\vee) =  -w_2(\alpha_j^\vee)$. On the other hand $\det(\tau_2)= - \det(w_2)$. The minus signs cancel each other and there are $2s_i$ pairs $(w_1,w_2)$ in $W^2$ which give the same quantity. We illustrate this situation with the following example.

\begin{example} Consider the $G_2$ case with $i=j=1$ and $k=2$. Note that $\rho=\omega_1+\omega_2$ and $\rho-\omega_1 = \omega_2$. Recall that $\omega_1 = 2\alpha_1 + \alpha_2$ and $\omega_2 = 3\alpha_1 + 2\alpha_2$. We look for elements of the form $\lambda = w_1(2\omega_1) + w_2(\omega_2)$ in $\Lambda \cap \mf{C}$ between $\rho=5\alpha_1+3\alpha_2$ and $2\omega_1+ \omega_2= 7\alpha_1+4\alpha_2$. It turns out that there are precisely two.
	
We start with $\lambda_1 = 2\omega_1 + \omega_2$. In this case $(w_1,w_2)	\in \{1,\sigma_{\alpha_2}\}\times \{1,\sigma_{\alpha_1}\}$. We have $\lambda_1 = w_1(2\omega_1)+w_2(\omega_2)$, trivially. We have 
\[d_{11}^2(w_1,w_2)=\det(w_2)(2\omega_1 ,\pm\alpha_1^\vee)=2.\]

Secondly, $\lambda_2 = \omega_1 + \omega_2$. In this case, $(w_1,w_2)	\in \{\sigma_{\alpha_1},\sigma_{\alpha_1}\sigma_{\alpha_2}\}\times \{\sigma_{\alpha_2},\sigma_{\alpha_2}\sigma_{\alpha_1}\}$. For these pairs $(w_1,w_2)$, we note that $w_1(2\omega_1)=-2\omega_1+2\omega_2$ and $w_2(\omega_2)=3\omega_1-\omega_2$ whose sum is equal to $\lambda_2$. We have
\[d_{11}^2(w_1,w_2)=\det(w_2)(-2\omega_1+2\omega_2 ,\mp(\alpha_1^\vee +3\alpha_2^\vee))=-4.\]	
	
Our main result, namely Theorem~\ref{main}, implies that $$\partial g_1/\partial y_1 = 2\chi_{\omega_1}-4\chi_0$$ as expected from Example~\ref{exampleche}. This result is obtained without computing the polynomials $g_1$ and $g_2$.
\end{example}

We finish this manuscript by giving some examples of low rank for arbitrary $k$. For this purpose, we need to use the notion of highest root.

Recall that the matrices $T_w$ of Lemma~\ref{glz} has integer integers. The matrices $T_w$ has a symmetric nature when they act on the coordinate vectors of weights and coroots due to the following equation
\[T_w=[(\omega_i,w(\alpha_j^\vee))]=[(w^{-1}(\omega_i),\alpha_j^\vee)]\]
If we fix the basis $\mathcal{A}=\{\omega_1,\ldots,\omega_n\}$, then $W$ acts on the row vector $[\lambda]_\mathcal{A}$ by the right multiplication $[\lambda]_\mathcal{A} \cdot T_w$. On the other hand, if we fix the basis $\mathcal{B}=\{\alpha_1^\vee,\ldots,\alpha_n^\vee\}$, then $W$ acts on the column vector $[\gamma]_\mathcal{B}$ by the left multiplication $T_w^{-1}[\gamma]_\mathcal{B}$. 

\begin{table}[!h]
	\[\begin{array}{|cccccccccc|} \hline
		\textnormal{Type} & A_n & B_n & C_n & D_n & E_6 & E_7 & E_8 & F_4 & G_2\\ \hline
		m_\mf{g} & 1 & 2 & 2 & 2 & 3 & 4 & 6 & 4 & 3 \\ \hline
	\end{array}\]
	\caption{\label{highigh}The highest coefficient of the highest root.}
\end{table}

Let $m_\mf{g}$ be the maximum of the absolute values of the entries of $T_w$. The integer $m_\mf{g}$ turns out to be  the highest coefficient of the highest root. These integers can be found in \cite{springer} and listed in Table~\ref{highigh}.

If $k \geq m_\mf{g}$, then we claim that the term $w_1(k\omega_i)$ in Theorem~\ref{main} must be of the form $k\omega_i$ with $w_1\in\Stab(\omega_i)$. To see this, we first note that the row vector $[\rho- \omega_j]_\mathcal{A}$ has entries all one except a single zero. The largest coordinate we can get from the computation of $[\rho- \omega_j]_\mathcal{A} \cdot T_{w_2}$ is the integer $m_\mf{g}$. If $k \geq m_\mf{g}$, the negative coordinates brought by $w_1(k \omega_i)$ cannot be cancelled by $w_2(\rho-\omega_j)$, and the resulting $\lambda=w_1(k\omega_i)+w_2(\rho-\omega_j)$ cannot be in the chamber $\mf{C}$. Observe that the below formula for $G_2$ is true for $k\geq 3$ but not for $k=2$. 

We use the shorter expressions $\chi_{a}$, $\chi_{a,b}$, and $\chi_{a,b,c}$ instead of $\chi_{a\omega_1}$, $\chi_{a\omega_1+b\omega_2}$, and $\chi_{a\omega_1+b\omega_2+c\omega_3}$ to save some space. The expressions with negative values of $a$,$b$, or $c$ are simply zero.
\[J(P_{A_1}^k) = k\chi_{k-1}\quad k\geq 1.\]

\[J(P_{A_2}^k) = k \begin{bmatrix}
	\chi_{k-1,0} & -\chi_{k-2,0} \\
	-\chi_{0,k-2} & \chi_{0,k-1}
\end{bmatrix} \quad k\geq 1.\]

\[J(P_{B_2}^k) = k\begin{bmatrix}
	\chi_{k-1,0}+\chi_{k-3,0} & -\chi_{k-2,0} \\
	-2\chi_{1,k-2} & \chi_{0,k-1}+\chi_{0,k-2}
\end{bmatrix} \quad k\geq 2.\]

\[J(P_{G_2}^k) = k\begin{bmatrix}
	\chi_{k-1,0}+\chi_{k-4,0}+2 \chi_{k-4,1} & -\chi_{k-2,0}-\chi_{k-3,0} \\
	-3\chi_{2,k-2}-3\chi_{2,k-3} & \chi_{0,k-1}+\chi_{0,k-2}+2 \chi_{1,k-2}
\end{bmatrix} \quad k\geq 3.\]

\[J(P_{A_3}^k) = k\begin{bmatrix}
	\chi_{k-1,0,0}&-\chi_{k-2,0,0}&\chi_{k-3,0,0} \\
	\chi_{1,k-3,0}-\chi_{0,k-2,1}&\chi_{0,k-1,0}-\chi_{0,k-3,0}&\chi_{0,k-3,1}-\chi_{1,k-2,0} \\
	\chi_{0,0,k-3}&-\chi_{0,0,k-2}&\chi_{0,0,k-1}
\end{bmatrix} \quad k\geq 1.\]

\section{Acknowledgement}
The second author joined the University of Massachusetts, Amherst Mathematics department as a graduate student in 2004, shortly after James E. Humphreys became a professor of emeritus, and had a few short talks with him during the colloquiums. Unfortunately, James E. Humphreys passed away in 2020 because of the pandemic. Both authors wish condolences for his loss and acknowledge his contribution to the theory of Lie algebras. Without his excellent books, the preparation of this manuscript might not be possible.

{\small
\def\refname{References}
\newcommand{\etalchar}[1]{$^{#1}$}

\end{document}